\documentclass[12pt]{amsart}
\usepackage{amssymb,amsmath,txfonts,amsrefs, amscd}
\usepackage{array,multirow,bigstrut}
\usepackage{appendix,mathtools}
\usepackage[all]{xy}
\usepackage{xargs}  
\usepackage[pdftex,dvipsnames]{xcolor}  % Coloured text etc.
\usepackage[colorinlistoftodos,prependcaption,textsize=tiny]{todonotes}
\usepackage{hyperref} % for contents and citations links 
%\usepackage[showframe]{geometry} %was used to show the margins

% about several types of notes using todo
\newcommandx{\unsure}[2][1=]{\todo[linecolor=red,backgroundcolor=red!25,bordercolor=red,#1]{#2}}

\newcommandx{\change}[2][1=]{\todo[linecolor=blue,backgroundcolor=blue!25,bordercolor=blue,#1]{#2}}

\newcommandx{\info}[2][1=]{\todo[linecolor=OliveGreen,backgroundcolor=OliveGreen!25,bordercolor=OliveGreen,#1]{#2}}

\newcommandx{\improvement}[2][1=]{\todo[linecolor=Plum,backgroundcolor=Plum!25,bordercolor=Plum,#1]{#2}}
\newcommandx{\thiswillnotshow}[2][1=]{\todo[disable,#1]{#2}}

\newtheorem{thm}{Theorem}[section]
\newtheorem{prop}[thm]{Proposition}
\newtheorem{lemma}[thm]{Lemma}
\newtheorem{remark}[thm]{Remark}

\newtheorem{defi}[thm]{Definition}

\DeclareMathOperator{\E}{\mathcal E}
\DeclareMathOperator{\Ima}{Im}
\numberwithin{equation}{section}

\newcommand{\ind}{\text {\rm{ind}\,}}
\newcommand{\cok}{\text {\rm{Coker}\,}}
     
\def\S{Steklov\,}

\def\HM{W^{k-\frac{1}{p},\,p}(\partial M^n)}

\def\WO{W^{k,\,p}( \Omega)}

\def\HO{W^{k-\frac{1}{p},\,p}(\partial \Omega)}
\def\ho{W^{k-1-\frac{1}{p},\,p}(\partial \Omega)}
\def\E{\rm{Emb}^m}

\def\R{\mathbb{R}}

\def\L{\Lambda}

\def\O{\Omega}

\def\PO{\partial \Omega}

\def\Ph{h^{\ast}}
\def\ph{h^{\ast-1}}
\def\n{\vec{n}}

%\newcommand\HD[1]{\mathbb{H}^{#1}(d)}

% tangent space T(), with bracket.

%%%%%%%%%%%%%%%%%%%%%%%%%%%%%%%%%%%%%%%%%%%%%%%%%%%%%%%%%%%%%%%%%%%%%%%%%
 % Generating the index

\begin{document}

\title[]{Boundary Perturbations of Steklov Eigenvalues}

\author{Lihan Wang}
\email{lihan.wang@csulb.edu}
\address{Department of Mathematics and Statistics, California State University, Long Beach, 1250 Bellflower Blvd, Long Beach, CA 90840.}

\thanks{2020 Mathematics Subject Classification: Primary 53C21; Secondary 58J50.\\ %	35P20 (Asymptotic distributions of eigenvalues in context of PDEs)\\
The author was partially supported by NSF grant DMS-2316620 and DMS-2524167.}

\date{\today}

\keywords{}

\begin{abstract}
We consider the dependence of non-zero Steklov eigenvalues on smooth perturbations of the domain boundary. We prove that these eigenvalues are generically simple under such boundary perturbations. This result complements our previous work on metric perturbations, thereby establishing generic simplicity Steklov eigenvalues under both fundamental geometric variations.

\smallskip
% \noindent\textbf{Keywords. }

\end{abstract}

\maketitle
%\tableofcontents
\pdfbookmark[0]{}{beg}
%%%%%%%%%%%%%%%%%%%%%%%%%%%%%
%\newpage

\section{Introduction}

Given a compact Riemannian manifold with smooth boundary, the Dirichlet-to-Neumann operator $\Lambda$ maps a function defined on the boundary to the normal derivative of its harmonic extension to the interior. Its eigenvalues are referred to as the Steklov eigenvalues. This spectral problem was first introduced by Steklov (\cite{S}) in 1902 with motivation from physics:
 harmonic extensions of the corresponding eigenfunctions describe a steady-state temperature on the domain for which the heat flux through the boundary is proportional to the temperature.

Steklov eigenvalue problem has since emerged as an active subject in mathematics. A seminal breakthrough by Fraser and Schoen revealed a profound link between the problem of maximizing Steklov eigenvalues and the rich theory of free boundary minimal surfaces in the unit Euclidean ball \cite{FS11,FS16,FS19}. Beyond pure geometry, the Steklov spectrum is fundamental to inverse problems. As the spectrum of the Dirichlet-to-Neumann operator, it provides crucial data for inverse conductivity and scattering problems, with important applications in non-destructive testing and imaging \cite{C, SU, GKLU}. We refer to \cite{GP} for a general survey and to \cite{CGGS} for more recent developments.
%like the inverse conductivity problems (\cite{C}, \cite{SU}) and cloaking (\cite{GKLU})

We are interested in the behavior of Steklov eigenvalues when the boundary is perturbed. For the simplicity, we focus on bounded domains with smooth boundary in $\R^n$. The topic of boundary perturbations and eigenvalues has a long
history and attracted many interests since the work of Rayleigh \cite{R1877} in 1877 (the first edition), Hadamard \cite{H1908} in 1908, Courant and Hilbert \cite{CH} in 1931. Since then, the study of boundary perturbations has remained an active field at the intersection of geometry and analysis. It also provides the foundation for applied fields such as shape optimization \cite{DZ2001}. 

One fundamental question in this topic is if the spectral simplicity  is a generic property with respect to perturbation of the boundary. In other words, if eigenvalues are simple on ``almost all" bounded domains, or a residual set of bounded domains. In her seminal work \cite{U}, Uhlenbeck investigate the case of the Laplace operator with the Dirichlet boundary condition and proved that the corresponding eigenvalues are simple for almost all domains. The same generic simplicity result was also obtained by Micheletti \cite{M72}. Later Henry \cite{H85,H86,H05} proved generic simplicity of eigenvalues for the Laplace operator with Robin or Neumann boundary conditions, and more general differential operators. All generic simplicity refers to non-zero eigenvalues.

In the Steklov case, the generic behavior of Steklov eigenvalues under boundary perturbations remains unexplored. The purpose of this paper is to establish the generic simplicity of Steklov eigenvalues. We prove that non-zero Steklov eigenvalues are simple for almost all bounded domains with smooth boundary. Precisely, our main theorem is stated as following:

\begin{thm}[Main Theorem]\label{MT}
Let $\O\in \R^n$ be a bounded domain with $C^m$ boundary for $m\geq 2$. Then there exists a residual set of $C^m$ embedding $h$ of $\O$ into $\R^n$ such that all non-zero Steklov eigenvalues on $h(\O)$ are simple.
\end{thm}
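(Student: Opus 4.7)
The plan is to adapt the classical transversality / Baire-category framework of Uhlenbeck \cite{U} and Henry \cite{H05} to boundary perturbations of the Steklov problem, building on the author's previous work on metric perturbations alluded to in the abstract. First, I would pull everything back to a fixed domain: given $h \in \E$, set $g_h = h^{\ast} g_{\mathrm{eucl}}$; the Steklov spectrum of $h(\O) \subset \R^n$ then coincides with that of $(\O, g_h)$. This recasts the problem in terms of the restricted family of pullback-Euclidean metrics on $\O$, which is strictly smaller than the space of all smooth metrics, so the metric-perturbation result cannot be invoked directly --- one must verify that normal deformations of $\PO$ supply enough flexibility on their own.

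Second, I would derive the relevant variation formula. For a smooth deformation $h_t = h + t v$, analytic perturbation theory (Rellich--Kato) guarantees that a non-zero Steklov eigenvalue $\lambda$ of multiplicity $k$ splits into $k$ analytic branches whose derivatives at $t=0$ are the eigenvalues of a symmetric $k \times k$ matrix
\[
M_{ij}(v) \;=\; \int_{\PO} \Psi_{ij} \,(v \cdot \n)\, dS,
\]
where $\{\phi_i\}_{i=1}^k$ is an $L^2(\PO)$-orthonormal basis of the $\lambda$-eigenspace and $\Psi_{ij}$ is a Hadamard-type bilinear expression in $\phi_i, \phi_j$ and their tangential derivatives, with curvature corrections from the first variation of the boundary. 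Only the normal component of $v$ enters, since tangential deformations come from diffeomorphisms of $\O$ and preserve the spectrum. By the standard openness-plus-density argument (for each $N$, the set of $h$ for which the first $N$ non-zero Steklov eigenvalues are simple is open in $\E$, and generic simplicity follows from a countable intersection), the theorem reduces to showing that for every such $h$ and every multiple non-zero eigenvalue $\lambda$, the linear map $v \mapsto M(v)$ is surjective onto the space of symmetric $k \times k$ matrices. Equivalently, if a symmetric $A = (a_{ij})$ satisfies $\sum_{i,j} a_{ij}\Psi_{ij} \equiv 0$ on $\PO$, then $A = 0$.

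This surjectivity / linear-independence step is the main obstacle I expect. In contrast to interior metric perturbations, a boundary perturbation only sees the restriction of the eigenfunctions to $\PO$, so the required independence must be extracted from boundary data alone. My plan is to exploit the rigidity of harmonic extensions: combining the identity $\sum a_{ij}\Psi_{ij} \equiv 0$ with the Steklov boundary condition $\P_{\n} u_i = \lambda \phi_i$ and the harmonicity of the $u_i$ in $\O$, one can propagate the relation inward via unique continuation and obtain an algebraic identity among the real-analytic functions $u_i$ on $\O$. The linear independence of the $\phi_i$, together with $\lambda \neq 0$ (which is precisely where the non-zero eigenvalue hypothesis enters, avoiding the degeneracy of the constant eigenfunction), should then force $A = 0$.

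Once surjectivity is secured, a Sard--Smale / Baire-category argument on the Banach manifold $\E$ of $C^m$ embeddings produces a residual set of $h$ on which all non-zero Steklov eigenvalues are simple, yielding Theorem~\ref{MT}.
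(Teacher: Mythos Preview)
Your overall transversality framework---pull back to a fixed domain, set up an evaluation map, compute a Hadamard-type first variation, and apply Sard--Smale/Baire category---matches the paper's, and your reduction of the problem to surjectivity of $v\mapsto M(v)$ onto symmetric $k\times k$ matrices (equivalently, pointwise linear independence of the $\Psi_{ij}$ on $\partial\Omega$) correctly isolates the main obstacle. The gap is in your plan for that step.

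The paper's variation formula shows that failure of surjectivity at $(f,\lambda,h)$ is equivalent to the existence of a Steklov eigenfunction $\psi_h\perp f_h$ for the same $\lambda$ satisfying
\[
\nabla_{\partial\Omega_h} f_h \cdot \nabla_{\partial\Omega_h} \psi_h \;-\; (H+\lambda)\,\lambda\, f_h\psi_h \;=\;0 \quad\text{on } \partial\Omega_h,
\]
with $H$ the mean curvature. Your proposal is to ``propagate this inward via unique continuation'' and derive an algebraic identity among the harmonic extensions forcing $\psi=0$. But this is a single scalar relation in \emph{tangential} gradients together with the extrinsic quantity $H$; there is no elliptic equation for any combination of $u,w$ whose Cauchy data on $\partial\Omega$ vanishes as a consequence, so ordinary unique continuation has nothing to bite on. The paper states explicitly that it could not verify this surjectivity (condition 2(a) of its transversality theorem), and that for boundary perturbations ``critical points can occur''; the density argument that works for interior metric perturbations is unavailable precisely because boundary deformations only move a codimension-one set.

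The missing idea is a second layer of transversality. The paper packages the constraint above as the third component of a new evaluation map
\[
\Phi(f,\psi,\lambda,h)=\bigl(\phi(f,\lambda,h),\ \phi(\psi,\lambda,h),\ \Psi(f,\psi,\lambda,h)\bigr),
\]
and shows that $D_1\Phi$ is left-Fredholm of index $-\infty$: the $h$-variation of $\Psi$ contains a leading second-order term $\lambda f\psi\,\Delta_{\partial\Omega}\sigma$ (with $\sigma=\dot h\cdot\vec n$), whose image modulo $\operatorname{Im}D_1\Phi$ is infinite-dimensional because $f\psi\not\equiv 0$ on $\partial\Omega$ by the weak unique continuation principle for Steklov eigenfunctions. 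Henry's \emph{generalized} transversality theorem (which allows negative index and replaces surjectivity of $D\Phi$ by $\dim\bigl(\operatorname{Im}D\Phi/\operatorname{Im}D_1\Phi\bigr)\ge q+\dim\ker D_1\Phi$) then gives that the set of $h$ admitting such a pair $(f,\psi)$ is meager. On its open dense complement $E_1$ the first map $\phi$ does have $0$ as a regular value, and the standard Sard--Smale argument finishes. So unique continuation is used, but only to guarantee $f\psi\not\equiv 0$---not to rule out the boundary identity directly as you propose.
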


\begin{remark}This theorem also holds for compact manifolds with boundary that admit an embedding into $\R^n$. For a compact Riemannian manifold, such an embedding is guaranteed by the Nash embedding theorem.
\end{remark}

Our theorem completes a parallel investigation to our earlier work on metric perturbations \cite{W22}, establishing generic simplicity for Steklov eigenvalues under both fundamental geometric variations. This qualitative result also complements the local, quantitative analysis given by Hadamard-type formulas for Steklov eigenvalues such as \cite{DKL16}.

To establish the generic simplicity for \S eigenvalues, we follow the spirit of Uhlenbeck's work in \cite{U} by employing the infinite dimensional transversality theory. A central element in this approach is understanding how the Dirichlet-to-Neumann operator $\L$ varies under perturbations of the domain boundary. 

The Dirichlet-to-Neumann operator is a first-order, elliptic, self-adjoint pseudodifferential operator. For a typical differential operator, its variation under boundary deformation can be computed locally. In contrast, the variation of $\L f$ depends not only on the change in the boundary itself but also on how the harmonic extension of $f$ changes. To manage this complexity, we use a two-step variational method in \cite{W22}, deriving the required formula from the variation of harmonic extensions with respect to the boundary. We note that while Henry \cite{H05} obtained general variation formulas for differential operators under boundary perturbations, those results and methods do not directly apply to the nonlocal Dirichlet-to-Neumann operator for the reason stated.

%One key in applying transversality theory is the variation formula of \DN operator with respect to boundary perturbations. As known, \DN is a first-order, elliptic, self-adjoint and pseudo-differential operator. In the case of differential operators, corresponding variations can be calculated locally. However, in our case, the variation of $\L f$ depends on not only the perturbation of boundary, but also the change of the harmonic extension of $f$. To overcome the complication, we adopt the idea of two-step variation in \cite{W22} and derive the variation formula of $\L f$ based on the corresponding variation of harmonic extensions with respect to boundary perturbations.  We would like to point it out that D. Henry \cite{H05} derived general variation formulas of differential operators with respect to boundary perturbations. Unfortunately we can not apply these results and ideas to \DN for the same reason. 

A further challenge lies in verifying that zero is a regular value of certain evaluation maps. We construct such a map following techniques from \cite{U, W22}. In the context of metric perturbations, a density argument ensures that critical points of the evaluation map do not exist. For boundary perturbations of Steklov eigenvalues, however, critical points can occur. Using variational formulas, we characterize these critical points by an infinite-dimensional condition. Although we cannot eliminate them entirely, we can prove their occurrence is non-generic.

To achieve this, we introduce a second evaluation map, applying the framework developed by D. Henry in \cite{H05}, and invoke transversality theory. Due to the infinite-dimensional nature of the critical point condition, this new evaluation map has negative infinite Fredholm index. Consequently, standard transversality theorems are insufficient, and we must employ a generalized version proved by D. Henry (\cite{H05}). By combining the analysis from both evaluation maps, we conclude that Steklov eigenvalues are generically simple.

%Another challenge appears in the verification of $0$ as a regular value of evaluation maps. We introduce an evaluation map by adopting approaches and results in \cite{U} and \cite{W22}. In the case of metric perturbation in \cite{W22}, this is achieved by a good density property which implies that critical points do not exist. However, in the case of boundary perturbation for Steklov eigenvalues, critical points may exist for such evaluation map. In fact, using variational formulas, we obtain a characterization of critical points (Property 4.10) which turns out to be an infinite dimensional condition. Even though we cannot rule out the existence of critical points, we can show this phenomena is not generic. To do so, we adopt the idea and approach in \cite{H05} to introduce another evaluation map and apply transversality theory. Due to the infinite dimensional condition for critical points, this evaluation map has $-\infty$ index and the usual transversality theorems (like in \cite{U} and \cite{W22}) fails. A generalized transversality theorem proved by D. Henry is employed for this case. Combining results from these two evaluation maps, we can show the generic simplicity of Steklov eigenvalues. 

%In the case of differential operators, Green's function is employed in \cite{U}. In the case of 

The article is organized as follows. In Section 2, we recall basic notions and facts concerning boundary perturbations. Section 3 is devoted to the variation of harmonic extensions and of the Dirichlet‑to‑Neumann operator with respect to boundary perturbations; we also restate the weak unique continuation property for Steklov eigenfunctions. In Section 4, we review the necessary background from transversality theory, including D. Henry's generalized transversality theorem. We then introduce two evaluation maps and employ them to prove the main result.

\section{Preliminaries}

We fix notation and collect the necessary preliminary material. Most of the definitions and results presented here are drawn from \cite{H05}.

Through out this paper, $\O$ stands for a bounded domian in $\R^n$ with $C^m$ boundary $\PO$ and $m,n\geq 2$. Here $C^m$ boundary means that there exists a $C^m$ function $\phi : \R^n\rightarrow \R$  which is at least in $\mathcal{C}_{unif}^1(\R^n, \R)$ such that $\O=\{x\in \R^n| \phi(x)>0\}$ and $\phi(x)=0$ implies that $|\nabla \phi| \geq 1$. 

%\subsection{Differential calculus of boundary perturbations}
We consider the set of $C^m$ embedding of $\O$ into $\R^n$ : 
 \begin{align}
\E(\O)&=\{h \in C^m(\O, \R^n)|\, \text{$h$ is injective and $\frac{1}{|\rm{det}( h^{\prime})|}$ is bounded in $\O$}\}
\end{align} which is an open subset of $C^m(\O, \R^n)$. The inclusion map of $\O$ into $\R^n$ will be denoted by $i_{\O}$.

For $h\in\E(\O) $, we denote its image by $\O_h=h(\O)$, the ``perturbed" domain. The perturbed boundary satisfies $\partial \O_h=h(\PO)$. We define the following pull-back map 
%is $C^m-$ diffeomorphim to $\O$. 
\begin{align}
\begin{aligned}
\Ph: \,&C^m(\O_h)\rightarrow C^m(\O)\\
&u\rightarrow u\circ h
\end{aligned}
\end{align} with inverse $\ph=(h^{-1})^{\ast}$.  And we use the notation $u_h=h^{\ast}u= u\circ h$ denotes the pull back of $u$.

The variation of $h$ will be denoted by $\dot{h}\in C^m(\O, \R^n)$. We can write the variation near the boundary as 
\begin{align*}
\dot{h}(x)&=\sigma(x) \n+\tau(x), x \in \PO
\end{align*} with the function $\sigma \in C^m(\PO)$ and the tangent vector field $\tau \bot \n$. To compute the variation with respect to $h$, we always consider a smooth curve $t \rightarrow h(, t)\in \E(\O)$ on some open interval $(a,b)$ and take the derivative with respect to $t$. In fact,
\begin{align*}
\frac{\partial h}{\partial t}(x, 0)&=\dot{h}(x)
\end{align*}where $x \in \O$ is fixed. In some calculations, however, we differentiate with respect to $t$ while keeping $y=h(x,t)$ fixed. To distinguish these two situations, we will explicitly indicate which variable is held fixed (e.g., $x=const.$,  $y=const.$)
 when taking derivatives.

Let $\n$ denote the unit outward normal along $\PO$. According to Corollary 1.6 in \cite{H05}, when $m\geq 2$, there exists a function $\phi\in C^m(\R^n, R)$ such that $\O=\{x| \phi(x)>0\}$ and $|\nabla \phi(x)|=1$ in a neighborhood of $\PO$. Moreover, such $\phi$ is unique on a neighborhood of $\PO$. In this case we can extend $\n= \nabla \phi$ on a neighborhood of $\PO$ and straightforward calculation yields the following properties. % extend $\n$ constantly along the normal direction
\begin{prop}\label{NP}
For a domain $\O \in \R^n$ with $C^m$ boundary $\PO$ and $m\geq 2$, its unit outward normal $\n$ satisfies:
\begin{enumerate}
\item the matrix $\left[\frac{\partial n_i}{\partial x_j}\right]_{i, j=1}^n$ is symmetric on $\PO$;
\item $\frac{\partial \n}{\partial \n}=0$ on $\PO$.
\end{enumerate}
\end{prop}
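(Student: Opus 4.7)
The plan is to exploit the existence, guaranteed by Corollary 1.6 of \cite{H05}, of a $C^m$ defining function $\phi$ with $|\nabla \phi|\equiv 1$ on a neighborhood $U$ of $\partial\Omega$, together with the identification $\vec n=\nabla \phi$ on $U$. Both statements will then follow from elementary calculus: (1) is equality of mixed partials, and (2) is the derivative of the relation $|\nabla\phi|^2=1$.

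For (1), since $n_i=\partial\phi/\partial x_i$ on the neighborhood $U$, I would write
\begin{equation*}
\frac{\partial n_i}{\partial x_j}=\frac{\partial^2\phi}{\partial x_j\,\partial x_i}.
\end{equation*}
Because $\phi\in C^m(\mathbb{R}^n,\mathbb{R})$ with $m\geq 2$, the Hessian of $\phi$ is symmetric on $U$, and in particular on $\partial\Omega$. This gives $\partial n_i/\partial x_j=\partial n_j/\partial x_i$, which is the desired symmetry.

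For (2), I would differentiate the identity $\sum_k n_k^2\equiv 1$ on $U$ with respect to $x_j$ to obtain
\begin{equation*}
\sum_{k=1}^n n_k\,\frac{\partial n_k}{\partial x_j}=0 \qquad\text{on } U.
\end{equation*}
Then I would compute the $i$-th component of $\partial\vec n/\partial\vec n=\sum_j n_j\,\partial\vec n/\partial x_j$, which is $\sum_j n_j\,\partial n_i/\partial x_j$. Applying the symmetry from (1) turns this into $\sum_j n_j\,\partial n_j/\partial x_i$, and this vanishes by the previous display with $j$ replaced by $i$. Thus every component of $\partial\vec n/\partial\vec n$ is zero on $\partial\Omega$.

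There is no real obstacle here; the only subtlety worth mentioning explicitly is that both claims depend on $\vec n$ being extended to the \emph{open neighborhood} $U$ as $\nabla\phi$, since one needs to differentiate the components of $\vec n$ transversally to $\partial\Omega$ as well as tangentially. Once that extension is in place (which is exactly what the hypothesis $m\geq 2$ and Corollary 1.6 of \cite{H05} provide), both parts are one-line consequences of $C^2$ calculus applied to $\phi$.
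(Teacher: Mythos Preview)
Your proof is correct and is exactly the ``straightforward calculation'' the paper alludes to but does not spell out: the paper simply notes that with $\vec n=\nabla\phi$ on a neighborhood of $\partial\Omega$ the two assertions follow, and your argument via the symmetry of the Hessian of $\phi$ together with differentiation of $|\nabla\phi|^2=1$ is precisely that calculation.
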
 

%\begin{thm}[Theorem 1.13 in \cite{H05}]
%Let $S$ be a $\mathcal{C}^2$ hypersurface in $\R^n$ and $u: \R^n\rightarrow R$  be $\C^2$ in a neighborhood of $S$. Let $N$ be a normal vector field for $S$. Then 
%\begin{align*}
%\Delta_S u&=\Delta u-H \frac{\partial u}{\partial N}-\frac{\partial^2 u}{\partial N^2}+\nabla_s u\cdot  \frac{\partial N}{\partial N}
%\end{align*} on $S$. Here $H=\rm{div} N$ is the mean curvature of $S$. We may choose $N$ so that $ \frac{\partial N}{\partial N}=0$ on $S$ and then the final term is omitted.
%\end{thm}
Let $\Delta, \Delta_{\PO}$ denote the Laplacian on $\O$ and $\PO$ respectively. The second property in Proposition \ref{NP} implies the following relationship between them.
\begin{prop}\label{deco}
Let $\O$ be a bounded domain in $\R^n$ with $C^m$ boundary $\PO$ and $m\geq 2$. Then on $\PO$, there is
\begin{align*}
\Delta u
%&=\Delta_{\PO} u+H \frac{\partial u}{\partial N}+\frac{\partial^2 u}{\partial N^2}\\
&=\Delta_{\PO} u+H u_n+u_{nn}.
\end{align*} Here $H=\nabla \cdot \n$ is the mean curvature of the boundary and $u_n= \nabla_{\n} u$. %=\frac{\partial u}{\partial \n}
\end{prop}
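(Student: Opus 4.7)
The plan is to decompose the Euclidean gradient of $u$ in a neighborhood of $\PO$ into its normal and tangential components and then take the divergence. Using the extension $\n=\nabla\phi$ guaranteed by the discussion preceding Proposition \ref{NP}, write $\nabla u = u_n\n + \nabla_T u$ near $\PO$, where $\nabla_T u := \nabla u - u_n\n$ is orthogonal to $\n$ by construction. Then $\Delta u = \nabla\cdot(u_n\n) + \nabla\cdot(\nabla_T u)$, and the task reduces to identifying the first summand as $u_{nn}+Hu_n$ and the second as $\Delta_{\PO}u$ on $\PO$.

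The normal piece is immediate from the product rule: $\nabla\cdot(u_n\n) = (\nabla u_n)\cdot\n + u_n(\nabla\cdot\n) = u_{nn} + H u_n$, using $H=\nabla\cdot\n$ and the fact that $u_{nn}=\nabla_{\n}(\nabla_{\n}u)$. For the tangential piece, the key observation is that $(\nabla_T u)\cdot\n\equiv 0$ throughout the neighborhood (not merely on $\PO$), so $\nabla_T u$ is tangent to every level set of $\phi$. In a local orthonormal frame $\{e_1,\dots,e_{n-1},\n\}$ adapted to these level sets, differentiating the identity $(\nabla_T u)\cdot\n\equiv 0$ along $\n$ and invoking Proposition \ref{NP}(2) yields $\n\cdot\nabla_{\n}(\nabla_T u) = -(\nabla_T u)\cdot\nabla_{\n}\n = 0$, so the normal contribution to $\nabla\cdot(\nabla_T u)$ drops out. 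The remaining tangential sum, restricted to $\PO$, coincides with the intrinsic surface divergence of $\nabla_T u|_{\PO} = \nabla_{\PO}u$, because the Gauss-formula difference between the flat connection of $\R^n$ and the Levi--Civita connection of $\PO$ lies in the normal direction and is therefore orthogonal to each $e_i$. Hence $\nabla\cdot(\nabla_T u) = \Delta_{\PO}u$ on $\PO$, and adding the two pieces gives the claimed identity.

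The main delicacy lies in the tangential step, and it is precisely where Proposition \ref{NP}(2) is essential: if $\nabla_{\n}\n$ were nonzero, the normal-direction contribution to $\nabla\cdot(\nabla_T u)$ would survive and contaminate the identification with $\Delta_{\PO}u$, producing a spurious first-order term. The rest is routine product-rule bookkeeping and the standard Gauss splitting of $\nabla^{\R^n}_{e_i}e_j$ into its $\PO$-tangential and normal parts.
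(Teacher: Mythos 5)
Your argument is correct. Note that the paper gives no proof of Proposition \ref{deco} at all: it simply cites Theorem 1.13 of \cite{H05} and Chapter 4 of \cite{Li12}, where the standard derivation traces the ambient Hessian in a frame $\{e_1,\dots,e_{n-1},\n\}$ adapted to $\PO$, obtaining $\sum_i\nabla^2u(e_i,e_i)=\Delta_{\PO}u+Hu_n$ from the Gauss formula and $\nabla^2u(\n,\n)=u_{nn}$ on $\PO$ from Proposition \ref{NP}(2). Your route --- splitting $\nabla u=u_n\n+\nabla_T u$ with the extended normal $\n=\nabla\phi$ and taking the divergence of each piece --- is an equivalent repackaging of that computation: the normal piece is pure product rule (valid in the whole neighborhood, with $H=\nabla\cdot\n$ the extended divergence), and in the tangential piece you correctly use Proposition \ref{NP}(2) (equivalently $|\nabla\phi|\equiv 1$ near $\PO$, which in fact gives $\nabla_{\n}\n=0$ throughout that neighborhood, not only on $\PO$) to kill the term $\n\cdot\nabla_{\n}(\nabla_T u)$ before identifying the remaining tangential trace with $\operatorname{div}_{\PO}(\nabla_{\PO}u)=\Delta_{\PO}u$ via the Gauss formula. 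This pinpoints exactly the input the paper alludes to when it says property (2) of Proposition \ref{NP} implies the decomposition, and your version has the advantage of being self-contained rather than deferred to the references.
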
 This proposition is a corollary of Theorem 1.13 in \cite{H05} or argument in Chapter 4 in \cite{Li12}.

%\subsection{The variation formula of the outward normal}

For $h\in \E(\O)$, we can define $\n_{h}$ on a neighborhood of $\partial \O_h$ by
\begin{align*}
\n_{h(\O)}(y)=\n_{h(\O)}(h(x))&=\frac{(h^{-1}_{x})^{T}\n(x)}{\|(h^{-1}_{x})^{T}\n(x)\|}
%h^{\ast}\n_{h(\O)}(x)&=\frac{(h^{-1}_{x})^{T}\n(x)}{\|(h^{-1}_{x})^{T}\n(x)\|}.
\end{align*} which is the unit outward normal on $\partial \O_h$. Here $(h^{-1}_{x})^{T}$ is the inverse-transpose of the Jacobian matrix $h_x=[\frac{\partial h_i}{\partial x_j}]^n_{i,j=1}$ and $\|\cdot\|$ is the Euclidean norm. 

For $x$ near $\PO$, the image $y=h(x,t)$ is near $\PO(t)$. Hence we may compute the derivative $\frac{\partial \n_{h}}{\partial t}|_{y=const.}$. In fact, we have the following variation formula of $\n_h$ from Lemma 2.3 in \cite{H05}.
\begin{lemma}\label{var of n}
Let $\O$ be a bounded domain in $\R^n$ with $C^m$ boundary and $m\geq 2$. Suppose $h(\cdot, t)\in \E(\O)$ satisfies
\begin{align*}
\left\{
\begin{aligned}
&h(\cdot,0)=i_{\O},\\
&\frac{\partial}{\partial t}h(\cdot,,0)=\dot{h}.
\end{aligned}\right.
\end{align*} 
Here $\dot{h} \in C^m(\O, \R^n)$. Let $y=h(x,t)$ for $x\in \PO$. Then on $\PO$
\begin{align}\label{eqn: varN}
\frac{\partial \n_{h}}{\partial t}|_{y=const., t=0}=-\nabla_{\PO} \sigma
\end{align} with with $\sigma=\dot{h}\cdot \n$ along $\PO$.
\end{lemma}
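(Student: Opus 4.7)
The plan is to differentiate the explicit defining formula
\begin{align*}
\n_{h}(y) \;=\; \frac{(h_x^{-1})^{T}\n(x)}{\|(h_x^{-1})^{T}\n(x)\|}, \qquad x = h^{-1}(y,t),
\end{align*}
with respect to $t$ at $t=0$, keeping $y$ fixed. At $t=0$ one has $h=i_{\O}$, so $h_x=I$, the denominator is $1$, and the numerator equals $\n(x)$. Therefore the variation of the quotient is obtained by computing the variation of the numerator $N(x,t):=(h_x^{-1})^{T}\n(x)$ at a fixed image point $y$, and then projecting onto the tangent hyperplane $\n^{\perp}$.

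First I would compute how the preimage moves: differentiating $y = h(x(y,t),t)$ at $t=0$ and using $h_x|_{t=0}=I$ gives $\partial_t x|_{y=const,\,t=0} = -\dot h(x)$. Next, the Taylor expansion $h_x = I + t\,\dot h_x + O(t^2)$ yields $(h_x^{-1})^{T} = I - t\,(\dot h_x)^{T} + O(t^2)$, hence $\partial_t N|_{x=const,\,t=0} = -(\dot h_x)^{T}\n$. Combining these via the chain rule produces
\begin{align*}
\frac{\partial N}{\partial t}\bigg|_{y=const,\,t=0} \;=\; -(\dot h_x)^{T}\n - \n_x\,\dot h,
\end{align*}
where throughout I use the extension $\n=\nabla\phi$ to a neighborhood of $\PO$ guaranteed by Proposition \ref{NP}, so that $\n_x$ makes sense as a matrix of partial derivatives.

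The key algebraic step is to rewrite this in closed form in terms of $\sigma=\dot h\cdot\n$. Using the symmetry $\partial_i n_j=\partial_j n_i$ from Proposition \ref{NP}(1), the Leibniz rule gives componentwise
\begin{align*}
((\dot h_x)^{T}\n)_i \;=\; \sum_j n_j\,\partial_i\dot h_j \;=\; \partial_i(\dot h\cdot\n) - \sum_j \dot h_j\,\partial_i n_j \;=\; \partial_i\sigma - (\n_x\,\dot h)_i,
\end{align*}
so that $(\dot h_x)^{T}\n = \nabla\sigma - \n_x\,\dot h$ and the two occurrences of $\n_x\,\dot h$ in the chain-rule expression cancel, leaving $\partial_t N|_{y=const,\,t=0} = -\nabla\sigma$. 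Finally, subtracting the normal part to enforce the unit-length constraint on $\n_h$ gives
\begin{align*}
\frac{\partial \n_h}{\partial t}\bigg|_{y=const,\,t=0} \;=\; -\nabla\sigma + \n(\n\cdot\nabla\sigma) \;=\; -\nabla_{\PO}\sigma,
\end{align*}
as claimed.

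The main subtlety I anticipate is bookkeeping the distinction between holding $x$ fixed and holding $y$ fixed in the chain rule, together with ensuring that the intermediate appearance of the ambient gradient $\nabla\sigma$ (which implicitly uses extensions of $\dot h$ and $\n$ into a neighborhood of $\PO$) is harmless: the tangential projection cancels any dependence on normal derivatives, so the answer $-\nabla_{\PO}\sigma$ depends only on boundary data, as it must for a formula whose hypothesis is $\sigma\in C^m(\PO)$.
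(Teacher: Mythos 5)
Your computation is correct. Note that the paper itself does not prove this lemma: it simply quotes the formula from Lemma 2.3 of Henry's book \cite{H05}, so your argument serves as a self-contained verification rather than a parallel to an in-paper proof. Each step checks out: the preimage velocity $\partial_t x|_{y=const,t=0}=-\dot h$, the variation $-(\dot h_x)^T\n$ of the numerator at fixed $x$, the cancellation of the two $\n_x\dot h$ terms via the symmetry of $[\partial n_i/\partial x_j]$ on $\PO$ (Proposition \ref{NP}(1)), and the final projection $-\nabla\sigma+\n(\n\cdot\nabla\sigma)=-\nabla_{\PO}\sigma$ coming from differentiating the normalization, which indeed removes all dependence on the choice of extensions of $\n$ and $\dot h\cdot\n$ off the boundary. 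One small attribution slip: the extension $\n=\nabla\phi$ with $|\nabla\phi|=1$ near $\PO$ comes from the discussion preceding Proposition \ref{NP} (Corollary 1.6 in \cite{H05}), not from the proposition itself, which only records its consequences; this does not affect the validity of your argument.
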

\begin{remark}
In this variation lemma, as well as in the later variation lemmas, we only need to treat the special case $h(\cdot,0)=i_{\O}$, since this case is sufficient for the proof of the main results. 

In the general situation, let $h_0=h(\cdot, 0)$ and define \[\tilde{h}(x_0)=h(h_0^{-1}(x_0), t), \, x_0=h_0(x)\in h_0(\O).\] Then 
\[\tilde{h}(\cdot, 0)=i_{h_0(\O)}, \n_{h(t)}(x)=\n_{\tilde{h}(t)}(x_0), \frac{\partial \n_{h}}{\partial t}|_{y=const, t=0}(x)=\frac{\partial \n_{\tilde{h}}}{\partial t}|_{y=const., t=0}(x_0)\] for $x_0=h_0(x)$. Thus we can apply the lemma above to $\tilde{h}$ on $h_0(\O)$ to derive the variation formula of $h$. \end{remark}

\section{Variation formulas and the weak unique continuity principle}
%Consider an open, connected and bounded domain $\O\subset \R^n$ with $C^m$ boundary $\PO$ and $m, n\geq 2$. 
Given $\O\subset \R^n$, the Dirichlet-Neumann operator is defined as 
\begin{align}\label{DN}
\begin{aligned}
\Lambda:& \HO\rightarrow \ho\\
& f\rightarrow u_n
\end{aligned}
\end{align} with integers $m-1 \geq k\geq1, p\geq 2$. Here $u$ is harmonic extension of $f$ on $\O$. That is, $u$ is the solution of the following boundary value problem:
\begin{equation}\label{he}
\left\{
\begin{aligned}
\Delta u&=0, &&\O\\
u&=f, &&\PO.
\end{aligned}\right.
\end{equation}  And $u_n$ denotes the outward normal derivative along $\PO$. As known, $\Lambda$ is elliptic and self-adjoint and has discrete spectrum. 

In fact, according to trace theorems (Chapter 2 in \cite{NJ}) and elliptic theory (Chapter 9 in \cite{GT}), for any $f \in \HO$, there exists the unique harmonic extension $u \in \WO$ of $f$  when $\O$ has $C^{k,1}$ boundary. And its normal derivative $u_n$ belongs to $\ho$.

For $h\in \E(\O)$, let $f_h=\ph(f)$ be the pull-back of $f$ on $\PO_h$. We introduce the following pulled-back operator on $\PO$:
\begin{align}\label{L_h}
\begin{aligned}
L_h=h^{\ast}\Lambda h^{\ast-1}: &\HO\rightarrow \ho, \\
&f\rightarrow h^{\ast}(\Lambda f_h).
\end{aligned}
\end{align}
Here $\Lambda$ acts on the reference domain $\PO_h$. Let $u^h$ denote the harmonic extension of $f_h$ on $\O_h$:
\begin{align}\label{eqn:u^h}
\left\{
\begin{aligned}
\Delta u^h&=0, &&\O_h\\
u^h&=f_h, &&\PO_h.
\end{aligned}\right.
\end{align} 
Then $L_h (f)=h^{\ast}(\Lambda f_h)=h^{\ast}(u^h_{n_h})$ on $\O$ with $u^h_{n_h}$ as the outward normal derivative of $u^h$ along $\PO_h$.
%\begin{align*}
%\Lambda f_h&=u^h_n, &&\O_h\\
%L_h (f)&=h^{\ast}(\Lambda f_h)=h^{\ast}(u^h_n), &&\O\\
%\end{align*}Here $u^h_n$ denotes the outward normal derivative of $u^h$ along $\PO_h$.

We first evaluate the variation of $u^h$ with respect to $h$ when $y\in \O_h$ is fixed. 
\begin{lemma}[Variation of $u^h$]\label{lemma:v1}
Suppose $h(\cdot, t)\in \E(\O)$ satisfies
\begin{align*}
\left\{
\begin{aligned}
&h(\cdot,0)=i_{\O},\\
&\frac{\partial}{\partial t}h(\cdot,,0)=\dot{h},
\end{aligned}\right.
\end{align*} 
with $\dot{h} \in C^m(\O, \R^n)$. %with $\sigma=\dot{h}\cdot \n$ along $\PO$.

For $f \in \HO$ and its pull-back $f_h=(h^{-1})^{\ast}f$, let $u$ and $u^h$ denote the corresponding harmonic extensions respectively. Keep $y=h(x,t)$ fixed in $\O_h$. Then  $\frac{\partial u^h}{\partial t}|_{y=const.}$ at $t=0$ satisfies 
\begin{align}\label{eqn:$D_tu^h$}
\left\{
\begin{aligned}
\Delta (\frac{\partial u^h}{\partial t}|_{y=const., t=0})&=0, &&\O\\
\frac{\partial u^h}{\partial t}|_{y=const.,t=0}&=-\nabla u\cdot \dot{h}, &&\PO\\
\end{aligned}\right.
\end{align}
\end{lemma}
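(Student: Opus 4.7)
The plan is to derive both parts of the stated boundary-value problem by differentiating, with respect to $t$, the two defining equations for $u^h$ on the moving domain $\O_{h(t)}$. The interior equation will follow from the observation that the Euclidean Laplacian $\Delta_y$ commutes with $\partial_t|_{y=\mathrm{const.}}$, while the boundary condition will follow from the chain rule applied to the identity $u^h(h(x,t),t)=f(x)$ for $x\in\PO$.

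\textbf{Deriving the two equations.} Write $w$ for $\frac{\partial u^h}{\partial t}\big|_{y=\mathrm{const.},\,t=0}$. For any fixed $y$ in the interior of $\O$, continuity of $h$ in $t$ guarantees $y\in\O_{h(t)}$ for all sufficiently small $t$, and $\Delta_y u^h(\cdot,t)=0$ there. Because $\Delta_y$ acts only in $y$, it commutes with $\partial_t|_{y=\mathrm{const.}}$, so differentiating at $t=0$ yields $\Delta w=0$ in $\O$. For the boundary condition, any $x\in\PO$ satisfies $h(x,t)\in\PO_{h(t)}$, hence $u^h(h(x,t),t)=f_h(h(x,t))=f(x)$ for all small $t$. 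Differentiating this identity in $t$ by the chain rule gives
\begin{equation*}
\frac{\partial u^h}{\partial t}\Big|_{y=\mathrm{const.}}(h(x,t),t)+\nabla u^h(h(x,t),t)\cdot\frac{\partial h}{\partial t}(x,t)=0,
\end{equation*}
and specializing to $t=0$, where $h(x,0)=x$, $u^h(\cdot,0)=u$, and $\frac{\partial h}{\partial t}(\cdot,0)=\dot h$, produces $w(x)=-\nabla u(x)\cdot\dot h(x)$ on $\PO$, as claimed.

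\textbf{Technical point and main obstacle.} The main subtlety is to justify that $w$ is well-defined in the appropriate Sobolev setting, since $u^h$ is defined on the moving domain $\O_{h(t)}$ and the pointwise $t$-derivative at fixed $y$ a priori only makes sense for $y$ interior to every $\O_{h(t)}$ near $t=0$. The standard device is to transfer to the fixed reference domain via the pull-back $v^h:=h^{\ast}u^h$. The boundary data becomes $v^h|_{\PO}=f_h\circ h=f$, which is independent of $t$, and the pulled-back Laplace equation is a uniformly elliptic boundary-value problem on the fixed domain $\O$ whose coefficients depend smoothly on $t$ through the pulled-back metric $h^{\ast}g_{\mathrm{Euc}}$. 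Continuous dependence of solutions for such problems then yields $v^h$ of class $C^1$ in $t$ with values in $\WO$, and the pointwise relation
\begin{equation*}
\frac{\partial u^h}{\partial t}\Big|_{y=\mathrm{const.},\,t=0}=\frac{\partial v^h}{\partial t}\Big|_{x=\mathrm{const.},\,t=0}-\nabla u\cdot\dot h
\end{equation*}
on the interior of $\O$ transfers this regularity to $w$, turning the chain-rule derivation above into a rigorous identification of $w$ as the unique solution of the claimed boundary-value problem.
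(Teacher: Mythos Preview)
Your proof is correct and follows essentially the same route as the paper: differentiate the interior equation $\Delta_y u^h=0$ at fixed $y$ to get harmonicity of $w$, and differentiate the boundary identity $u^h(h(x,t),t)=f(x)$ at fixed $x$ via the chain rule to obtain the boundary condition. Your additional paragraph justifying the existence and regularity of $w$ via the pull-back $v^h=h^\ast u^h$ goes beyond the paper, which simply performs the formal differentiation without comment.
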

\begin{proof}
By \eqref{eqn:u^h}, there is $\Delta u^h(y) =0$ for $y\in \O_h$. Take $\frac{\partial }{\partial t}$ on both sides with $y=h(x,t)$ fixed in $\O_h$. It follows that
\begin{align*}
&\frac{\partial }{\partial t}|_{y=const.}\left(\Delta u^h(y)\right) =0, \\
\Rightarrow& \Delta\left(\frac{\partial u^h}{\partial t}|_{y=const.} \right)=0.
\end{align*} %Since $y=h(x,0)=x\in \O$ when $t=0$, 
The first equation in \eqref{eqn:$D_tu^h$} follows.

We rewrite the boundary condition of $u^h$ in \eqref{eqn:u^h} as
 \begin{align*}
 u^h(y)=u^h(h(x,t))&=f(x), \,  y=h(x,t)\in \PO_h.
\end{align*} With $x$ fixed in $\PO$, taking derivative in $t$ on both sides implies that
\begin{align*}
&\frac{\partial }{\partial t}|_{x= cont.}u^h(y)=0,\\
\Rightarrow &\frac{\partial }{\partial t}|_{y=const.}u^h(y)+(\nabla_yu^h)\cdot\frac{\partial y}{\partial t} =0,\\
\Rightarrow &\frac{\partial }{\partial t}|_{y=const.}u^h(y)=-\nabla u\cdot\frac{\partial y}{\partial t}.
\end{align*} The second equality follows from the chain rule. When $t=0$, there is $\frac{\partial y}{\partial t}=\frac{\partial h}{\partial t}=\dot{h}$ by the assumption. Thus it follows that 
\begin{align*}
&\frac{\partial u^h}{\partial t}|_{y=const., t=0}=-\nabla u\cdot \dot{h}.
\end{align*}The second equation in \eqref{eqn:$D_tu^h$} holds.

\end{proof}

Now we are ready to evaluate the variation of $L_h=h^{\ast}\Lambda (h^{-1})^{\ast}$ with respect to $h$ when $x\in \PO$ is fixed.

\begin{lemma}[Variation of $L_h$]\label{lemma:v2}
Under the assumptions of Lemma \ref{lemma:v1}, the variation of $L_h$ satisfies:
\begin{align}\label{eqn:v2 of Lh}
\frac{\partial L_h(f)}{\partial t}|_{x=const.,t=0}&=\nabla_{\PO}(\Lambda f)\cdot \dot{h}-\sigma (\Delta_{\PO}f+H(\Lambda f))-(\nabla_{\PO}\sigma) \cdot (\nabla_{\PO} f)-\Lambda(\nabla u\cdot \dot{h}).
\end{align}
 Here $H=\nabla \cdot \n$ is the mean curvature along $\PO$ and $\sigma= \dot{h}\cdot \n$. 

In addition, when $\dot{h}$ vanishes in the tangent direction, i.e., $\dot{h}=\sigma \n$ on $\PO$, there is
\begin{align}\label{eqn:v4 of Lh}
\frac{\partial L_h(f)}{\partial t}|_{x=const.,t=0}&=-\sigma \left(\Delta_{\PO}f+H(\Lambda f)\right)-(\nabla_{\PO}\sigma)\cdot (\nabla_{\PO} f)-\Lambda\left(\sigma (\Lambda f)\right).
\end{align}
\end{lemma}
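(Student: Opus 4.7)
The plan is to start from the identity $L_h(f)(x) = (u^h_{n_h})(h(x,t)) = \bigl(\nabla_y u^h \cdot \n_h\bigr)(y)\big|_{y=h(x,t)}$, and then differentiate in $t$ with $x$ held fixed. Because $y = h(x,t)$ moves with $t$ while $x$ does not, the chain rule splits the result into two pieces:
\begin{align*}
\frac{\partial L_h(f)}{\partial t}\Big|_{x=const.} &= \frac{\partial}{\partial t}\bigl(\nabla_y u^h\cdot \n_h\bigr)\Big|_{y=const.} + \nabla_y\bigl(\nabla_y u^h\cdot \n_h\bigr)\cdot \frac{\partial h}{\partial t}.
\end{align*}
Evaluating at $t=0$ turns this into an expression involving the unperturbed data $u$, $\n$ together with the two $y$-fixed time derivatives, which are precisely the quantities computed in Lemmas \ref{lemma:v1} and \ref{var of n}.

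For the first piece, I would distribute the $y$-fixed derivative across the product, giving $\nabla(\partial_t u^h|_{y=const.})\cdot\n + \nabla u\cdot \partial_t \n_h|_{y=const.}$. By Lemma \ref{lemma:v1}, $\partial_t u^h|_{y=const.,t=0}$ is harmonic with boundary value $-\nabla u\cdot\dot h$, so its normal derivative on $\PO$ is $-\Lambda(\nabla u\cdot\dot h)$. By Lemma \ref{var of n}, $\partial_t\n_h|_{y=const.,t=0}=-\nabla_{\PO}\sigma$, which is tangent to $\PO$; hence $\nabla u\cdot(-\nabla_{\PO}\sigma) = -\nabla_{\PO}f\cdot\nabla_{\PO}\sigma$ on $\PO$. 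This accounts for the $-\Lambda(\nabla u\cdot\dot h)$ and $-(\nabla_{\PO}\sigma)\cdot(\nabla_{\PO}f)$ terms in \eqref{eqn:v2 of Lh}.

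For the second (chain-rule) piece, I would decompose $\dot h = \sigma\n + \tau$ with $\tau\perp\n$ along $\PO$. Then $\nabla(\nabla u\cdot\n)\cdot\dot h = \sigma\,(\nabla u\cdot\n)_n + \tau\cdot\nabla_{\PO}(\Lambda f)$. The tangential part is just $\nabla_{\PO}(\Lambda f)\cdot\dot h$ since the normal component contributes nothing to the inner product with the tangent vector $\nabla_{\PO}(\Lambda f)$. For the normal part, I would use Proposition \ref{NP}(2) to rewrite $(\nabla u\cdot\n)_n = u_{nn}$ on $\PO$, and then invoke the decomposition $\Delta u = \Delta_{\PO}u + H u_n + u_{nn}$ from Proposition \ref{deco}. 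Since $u$ is harmonic and $u|_{\PO}=f$, this yields $u_{nn} = -\Delta_{\PO}f - H\Lambda f$, producing the term $-\sigma(\Delta_{\PO}f + H\Lambda f)$ in \eqref{eqn:v2 of Lh}. Assembling the four contributions gives the stated formula.

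The formula \eqref{eqn:v4 of Lh} then follows by specialization: if $\dot h = \sigma\n$ on $\PO$, then $\nabla u\cdot\dot h = \sigma u_n = \sigma\Lambda f$, so $\Lambda(\nabla u\cdot\dot h) = \Lambda(\sigma\Lambda f)$, while $\nabla_{\PO}(\Lambda f)\cdot\dot h = 0$ since $\dot h$ is purely normal. The main technical care, and the step most prone to sign or bookkeeping errors, is the second one: correctly separating the $y$-fixed derivative from the transport derivative of $y=h(x,t)$, and keeping track of which quantities are tangential versus normal so that the identifications $\nabla u\cdot\nabla_{\PO}\sigma = \nabla_{\PO}f\cdot\nabla_{\PO}\sigma$ and $(\nabla u\cdot\n)_n = u_{nn}$ are justified on $\PO$. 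The reduction of $u_{nn}$ via Propositions \ref{NP} and \ref{deco} is what converts the otherwise awkward interior second derivative into intrinsic boundary data and is the key geometric ingredient that makes the final formula look as clean as it does.
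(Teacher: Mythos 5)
Your proposal is correct and follows essentially the same route as the paper: the same chain-rule split of $\partial_t\bigl(\nabla_y u^h\cdot\n_h\bigr)$ into a $y$-fixed derivative (handled via Lemmas \ref{lemma:v1} and \ref{var of n}) and a transport term (reduced to boundary data via Propositions \ref{NP} and \ref{deco}). The only differences are cosmetic, e.g.\ you decompose $\dot h=\sigma\n+\tau$ rather than decomposing $\nabla u_n$ into tangential and normal parts, and you state explicitly the vanishing of $\nabla_{\PO}(\Lambda f)\cdot\dot h$ when $\dot h=\sigma\n$, which the paper leaves implicit.
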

\begin{proof}
Take $\frac{\partial}{\partial t}$ on both sides of \eqref{L_h} with $x\in \PO$ fixed. By the chain rule, it follows that
\begin{align}\label{eq:L_h1}
\begin{aligned}
\frac{\partial L_h(f)(x)}{\partial t}|_{x=const.}&=\frac{\partial}{\partial t}|_{x=const.}\left(h^{\ast}(\Lambda f_h)(x)\right)\\
&=\frac{\partial}{\partial t}|_{x=const.}\left(\Lambda f_h(y)\right), \, y=h(x,t)\\
&=\nabla_y(\Lambda f_h)\cdot \frac{\partial y}{\partial t}+\frac{\partial (\Lambda f_h)(y)}{\partial t}|_{y=const.}.
\end{aligned}
\end{align}

For the term $\nabla_y(\Lambda f_h)$, by the definition of $\Lambda$, there is
\begin{align*}
\nabla_y(\Lambda f_h)|_{t=0}&=\nabla_y( \n_h\cdot \nabla_y u^h)|_{t=0}=\nabla(\n \cdot \nabla u)\\
&=\nabla u_n=\nabla_{\PO} u_n+u_{nn} \n.
\end{align*} Here $\nabla$ and $\nabla_y$ denote the gradient on $\O$ and $\O_h$ respectively. At the same time, Proposition \ref{deco} implies that
\begin{align*}
u_{nn}&=\Delta u-\Delta_{\PO}u-Hu_n,\\
&=-\Delta_{\PO}u-Hu_n.
\end{align*} Thus 
\begin{align}\label{eq:L_h2}
\begin{aligned}
\nabla_y(\Lambda f_h)|_{t=0}&=\nabla_{\PO} u_n-(\Delta_{\PO}u+Hu_n)\n.
\end{aligned}
\end{align}

For the term $\frac{\partial}{\partial t}|_{y=const.}(\Lambda f_h)(y)$, by the definition of $\Lambda$, there is
\begin{align*}
\frac{\partial (\Lambda f_h)(y)}{\partial t}|_{y=const.}&=\frac{\partial}{\partial t}|_{y=const.}( \n_h\cdot \nabla_y u^h), \\
&=\frac{\partial \n_h}{\partial t}|_{y=const.}\cdot \nabla_y u^h+\n_h\cdot \frac{\partial}{\partial t}|_{y=const.}(\nabla_y u^h), \\
&=\frac{\partial \n_h}{\partial t}|_{y=const.}\cdot (\nabla_y u^h)+\n_h\cdot \nabla_y ( \frac{\partial u^h}{\partial t}|_{y=const.}).
\end{align*}
By \eqref{eqn: varN}, we have $\frac{\partial \n_h}{\partial t}|_{t=0}=-\nabla_{\PO} \sigma$. Plugging this into above formula yields that 
\begin{align}\label{eq:L_h3}
\begin{aligned}
\frac{\partial (\Lambda f_h)(y)}{\partial t}|_{y=const., t=0}&=-\nabla_{\PO} \sigma \cdot \nabla u+\n \cdot \nabla (\frac{\partial u^h}{\partial t}|_{y=const., t=0}),\\
&=-\nabla_{\PO} \sigma\cdot \nabla_{\PO} f+\nabla_n (\frac{\partial u^h}{\partial t}|_{y=const., t=0}).
\end{aligned}
\end{align} The second equality follows from $\nabla u=\nabla_{\PO} u+(\nabla_n u)\n=\nabla_{\PO} f+(\nabla_n u)\n$ on $\PO$. 

Plug \eqref{eq:L_h2} and \eqref{eq:L_h3} into \eqref{eq:L_h1}. Then we get
\begin{align}\label{eqn:v1 of Lh}
\begin{aligned}
\frac{\partial L_h(f)}{\partial t}|_{x=const., t=0}&=\nabla_{\PO}u_n\cdot \dot{h}-\sigma (\Delta_{\PO}f+Hu_n)-(\nabla_{\PO}\sigma) \cdot (\nabla_{\PO} f)\\
&+\nabla_n (\frac{\partial u^h}{\partial t}|_{y=const., t=0}).
\end{aligned}
\end{align} Here we use the fact that $\frac{\partial y}{\partial t}|_{t=0}=\frac{\partial h}{\partial t}|_{t=0}=\dot{h}$. At the same time, Lemma \ref{lemma:v1} implies that \[\nabla_n (\frac{\partial u^h}{\partial t}|_{y=const., t=0})=-\Lambda (\nabla u\cdot \dot{h}).\]  Plug this formula and $u_n=\Lambda f$ into \eqref{eqn:v1 of Lh}. Then \eqref{eqn:v2 of Lh} follows.
 
When $\dot{h}=\sigma \n$, there is \[\Lambda (\nabla u\cdot \dot{h})=\Lambda (\sigma u_n)=\Lambda (\sigma (\Lambda f)).\] Plug this equality into \eqref{eqn:v2 of Lh}. Then \eqref{eqn:v4 of Lh} follows.

\end{proof}

Finally, we introduce the weak unique–continuation principle for Steklov eigenfunctions, which is an essential tool in the proof of the main theorem. The following theorem is a consequence of the general version on Riemannian manifolds established in \cite{W22}, but it holds under weaker regularity assumptions. For completeness, we include a short proof.
\begin{thm}[Weak unique continuation principle]\label{wucp}
Let $\O$ be a bounded region in $\R^n$ with $C^m$ boundary for $m\geq 2$. Consider a non-constant function $f \in \HM$ with $m-1\geq k\geq 1$ and $p\geq 2$. Assume that $\Lambda f=\lambda f$. If $f=0$ on an open set of $\PO$, then $f$ vanish on $\PO$.
\end{thm}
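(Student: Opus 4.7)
The plan is to reduce the claim to a unique continuation statement for the harmonic extension of $f$. Let $u$ be the harmonic extension of $f$ as in \eqref{he}, and let $U\subset \partial\Omega$ denote the nonempty open set on which $f$ vanishes. Since $\Lambda f = \lambda f$, we have $u = f = 0$ and $u_n = \Lambda f = \lambda f = 0$ on $U$. Standard elliptic boundary regularity for the Dirichlet problem with trivial data on the $C^m$ portion $U$ (with $m\geq 2$) gives $u\in C^{1}(\closure{\Omega}\cap V)$ for a small neighborhood $V$ of any fixed point $x_0\in U$, so $u$ carries genuine zero Cauchy data on $U$.

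Next I would extend $u$ by zero across $U$. Shrinking $V$ so that $V\cap\partial\Omega\subset U$, define
\begin{align*}
\tilde u(y) = \begin{cases} u(y), & y\in V\cap\closure{\Omega},\\ 0, & y\in V\setminus\closure{\Omega}. \end{cases}
\end{align*}
The simultaneous vanishing of $u$ and $u_n$ on $V\cap\partial\Omega$ yields $\tilde u\in C^{1}(V)$. For $\phi\in C_c^{\infty}(V)$, Green's identity gives
\begin{align*}
\int_V \tilde u\,\Delta\phi\,dy = \int_{V\cap\Omega} u\,\Delta\phi\,dy = \int_{V\cap\partial\Omega} (u\,\phi_n - u_n\,\phi)\,d\sigma = 0,
\end{align*}
so $\tilde u$ is weakly harmonic, hence real analytic on $V$ by Weyl's lemma. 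Since $\tilde u$ vanishes on the nonempty open set $V\setminus\closure{\Omega}$, unique continuation for analytic functions forces $\tilde u\equiv 0$ on $V$, and therefore $u\equiv 0$ on $V\cap\Omega$.

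To finish, note that $u$ is real analytic throughout the connected open set $\Omega$ and vanishes on the nonempty open subset $V\cap\Omega$, so $u\equiv 0$ on $\Omega$. Taking traces yields $f\equiv 0$ on $\partial\Omega$, as desired.

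The main obstacle I anticipate is controlling the regularity of $u$ up to the piece $U$: without at least $C^{1}$ matching of $u$ and its normal derivative across $\partial\Omega$, the extension $\tilde u$ need not be of class $C^1$ and the weak-harmonicity computation breaks down. Under the hypothesis $m\geq 2$, this regularity follows from classical Agmon-Douglis-Nirenberg estimates applied locally on $U$ with trivial Dirichlet data, after which the rest of the argument is a direct reflection-by-zero combined with interior unique continuation for harmonic functions.
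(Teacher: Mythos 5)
Your proof is correct and follows essentially the same route as the paper: extend the harmonic extension $u$ by zero across the boundary portion where the Cauchy data $(u,u_n)=(0,\lambda f)=(0,0)$ vanish, conclude the extension is harmonic in a small ball straddling $\partial\Omega$, and propagate the vanishing to all of $\Omega$ by unique continuation for harmonic functions, giving $f\equiv 0$ on $\partial\Omega$. The only differences are cosmetic: you justify the harmonicity of the zero-extension via Green's identity and Weyl's lemma and conclude via real analyticity, whereas the paper asserts the $W^{2,p}$ extension directly and cites the weak unique continuation principle for second-order elliptic operators (Garofalo--Lin); this amounts to a slightly more detailed write-up of the same argument.
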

\begin{proof}
Let $u$ be the harmonic extension of $f$. Then 
\begin{align*}\left\{ 
\begin{aligned}
\Delta u&=0,\, && \O\\ 
u&=f,\, && \PO\\
u_n&=\lambda f,\, && \PO.
\end{aligned}%
\right.
\end{align*} 
Let $\Gamma$ be an open subset of $\PO$ where $f=0$. Fix an arbitrary point $x\in \Gamma$. Choose a small ball $B(x,r)$ such that $B(x,r)\cap \PO\subset \Gamma$. 
We can extend $u$ to $B(x,r)$ as
\begin{align*}
\left\{ 
\begin{aligned}
\tilde{u}&=u,\, && B(x,r)\cap \O\ \\
\tilde{u}&=0,\, &&B(x,r) \setminus \O
\end{aligned}
\right.
\end{align*} such that $\tilde{u} \in W^{2,p}(B)$. Then we have
\begin{align*}
\Delta \tilde{u}&=0, \, 
\end{align*} on $B(x,r)$ and $\tilde{u}=0$ on the open set $B(x,r)\setminus \bar{\O}$. By the weak unique continuation principle of second order elliptic operators, $\tilde{u}$ must vanish in $B(x,r)$. As a harmonic function on $\O$, $u$ vanishing in the open set $B(x,r)\cap \Omega $ implies that $u$ vanishes everywhere in $\O$.\\
\end{proof}
\begin{remark}
See Theorem 1.1 in \cite{GL} for the weak unique continuation principle of second order elliptic operators. 
\end{remark}

\section{Main results}
In this section, we will establish the generic simplicity of Steklov eigenvalues under boundary perturbations using transversality theorems.  

Let us recall some basic definitions at first. 
\begin{defi}\label{rv}Consider a $C^1$ map $F: X\to Y$ between two manifolds. Say $x\in X$ is a regular point of $F$ if $D_xF : T_x (X) \to T_{F(x)}(Y)$ is onto. Say $y \in Y$ is a {\it regular value} if every point $x \in F^{-1}(y)$ is a regular point. 
\end{defi}

\begin{defi}
Say a subset of a topological space is {\it residual} if it is the countable intersection of open and dense sets.  Say a a subset of a topological space is {\it meager} if its complement is residual. 
\end{defi} 

\begin{defi}
Consider a continuous linear map $T$ between two Banach spaces. Say $T$ is {\it Fredholm} if its image is closed, and both of its kernel and cokernel are finite dimensional. Say $T$ is {\it left-Fredholm} if its image is closed and its kernel  is finite dimensional. The index is defined as $ \ind T=\dim \ker T-\dim \cok T$. 
\end{defi}

%\begin{defi}
%Say a map $F: X \to Y$ is transverse to a submanifold $Y^{\prime} \subset Y$, if for all $x \in X$ with $F(x) \in Y^{\prime}$, \[
%\Ima (D_xF)+T_{F(x)}(Y^{\prime})=T_{F(x)}Y.\]
%%Here $\Ima (D_yF)$ refers to the image of the map $D_yF$.
%\end{defi}

We will employ the following transversality theorem, which is adapted from \cite{H05} for convenience in our setting.

 \begin{thm}\label{T1}
 Suppose given positive integers $k,q$; Banach manifolds $X, Y, Z$ of class $C^k$ and $X,Y$ being separable; an open set $A \subset X\times Y$; a $C^k$ map of $f: A\rightarrow Z$; and a point $\zeta\in Z$. \\
 Assume for each $(x, y) \in f^{-1}(\zeta)$ that 
 \begin{enumerate}
 \item $\frac{\partial f}{\partial x}: T_x X\rightarrow T_{\zeta} Z$ is left Fredhlom and has  index $<k$;
 \item Either 
 \begin{enumerate}
 \item $Df_{(x,y)}=\left(\frac{\partial f}{\partial x}, \frac{\partial f}{\partial y}\right): T_x X\times T_y Y \rightarrow T_{\zeta} Z$ is surjective;
 \item or $\dim \left\{ \Ima Df_{(x,y)}\setminus \Ima \frac{\partial f}{\partial x}|_{(x,y)}\right\} \geq q+\rm{dim}\, \ker\frac{\partial f}{\partial x}|_{(x,y)}$.
 \end{enumerate}
 \end{enumerate}
 Let $A_y=\{x| (x, y)\in A\}$ and \[\dot{Y}_{crit}=\{y| \zeta \, \text{is a critical value of}\, f(\cdot, y): A_y \rightarrow Z.\] Then  $\dot{Y}_{crit}$ is  meager and closed in Y.
 
 Moreover, if the index of $\frac{\partial f}{\partial x} \leq -q<0$ on $ f^{-1}(\zeta)$, then 
  \[\dot{Y}_{crit}= \{y| f(x, y)=\zeta \, \text{for some x} \in A_y\}.\] 
 \end{thm}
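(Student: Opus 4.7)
The plan is to follow the strategy of the Smale--Sard theorem adapted to the left-Fredholm setting via a finite-dimensional reduction, which is how D. Henry establishes this generalization.

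The ``moreover'' clause is immediate: if $\ind \partial_x f \leq -q < 0$ on $f^{-1}(\zeta)$, then $\dim \cok \partial_x f \geq q > 0$ everywhere there, so $\partial_x f$ is never surjective; consequently every $(x,y) \in f^{-1}(\zeta)$ produces a critical point of $f(\cdot, y)$ at $x$, which forces $\dot{Y}_{crit}$ to coincide with the $Y$-projection of $f^{-1}(\zeta)$.

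For the main claim I would first use separability of $X$ to reduce to a local problem: cover $A$ by a countable family of open sets $\{U_\alpha\}$ and let $\dot{Y}^\alpha_{crit}$ denote the corresponding local critical set. Since $\dot{Y}_{crit} \subseteq \bigcup_\alpha \dot{Y}^\alpha_{crit}$ and countable unions of meager sets are meager, it suffices to prove each $\dot{Y}^\alpha_{crit}$ is meager in $Y$. Next, fix a point $(x_0, y_0) \in f^{-1}(\zeta)$ and exploit the left-Fredholm hypothesis. Set $K = \ker \partial_x f|_{(x_0,y_0)}$, which is finite-dimensional, and $R = \Ima \partial_x f|_{(x_0,y_0)}$, which is closed, and choose topological complements $T_{x_0} X = K \oplus K'$ and $T_\zeta Z = R \oplus R'$. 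Since $\partial_x f$ restricts to a continuous bijection $K' \to R$, the implicit function theorem lets me solve $\pi_R(f(k, k', y) - \zeta) = 0$ locally for $k' = \varphi(k, y)$, producing the reduction
\[
g(k, y) := (I - \pi_R)\bigl(f(k, \varphi(k, y), y) - \zeta\bigr) \in R'.
\]
Zeros of $g$ parametrize $f^{-1}(\zeta)$ near $(x_0, y_0)$, and $\partial_x f$ is surjective at such a zero if and only if the $k$-partial of $g$ surjects onto $R'$.

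In case 2(a), $Df$ is surjective, so $g$ is a submersion onto $R'$; then $g^{-1}(0)$ is a Banach submanifold and the projection $g^{-1}(0) \to Y$ is classically Fredholm of index $\ind \partial_x f < k$, so the Smale--Sard theorem yields a residual set of regular values in $Y$. In case 2(b), $g$ need not submerse onto $R'$, but the hypothesis $\dim(\Ima Df \setminus \Ima \partial_x f) \geq q + \dim K$ supplies a finite-dimensional subspace $Y_0 \subset Y$ whose image under $Df$ covers enough of a complement to $\Ima \partial_x f$; restricting $g$ to $K \times Y_0$ gives a map between finite-dimensional manifolds to which the classical Sard theorem applies. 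The main obstacle of the proof is precisely case 2(b): one must track which $y$-directions survive projection to the possibly infinite-dimensional cokernel of $\partial_x f$ so that finite-dimensional Sard still controls the reduced map --- this is Henry's key refinement over the classical Smale--Sard argument. Closedness of $\dot{Y}_{crit}$ is obtained separately by showing its complement is open: if $\zeta$ is a regular value at $y_0$, then stability of surjectivity for left-Fredholm operators under small perturbations, together with the implicit function theorem used to localize the preimage near $(x_0, y_0)$, yields regularity for all $y$ in a neighborhood of $y_0$.
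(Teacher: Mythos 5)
The paper does not actually prove Theorem \ref{T1}: it is taken verbatim (in adapted form) from Henry's book and the ``proof'' is the remark that it is an immediate corollary of Theorem 5.4 in \cite{H05}, together with a matching of hypotheses (separability giving the ($\sigma$-)properness of the projection $f^{-1}(\zeta)\to Y$, and condition (1) being the semi-Fredholm/index condition there). Your proposal instead attempts to reprove Henry's theorem from scratch. The outline is fine for the easy parts --- the ``moreover'' clause and case 2(a), which is the classical Smale--Sard argument via the Lyapunov--Schmidt reduction you describe --- but it has genuine gaps exactly where the statement goes beyond classical Smale--Sard.

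First, case 2(b) is not proved. Since $\partial f/\partial x$ is only left-Fredholm, the complement $R'$ of its image may be infinite dimensional (the index may even be $-\infty$), so the reduced map $g$ takes values in a possibly infinite-dimensional space; ``restricting $g$ to $K\times Y_0$'' does not by itself give a map between finite-dimensional manifolds until one composes with a projection onto a suitable finite-dimensional subspace of $R'$ selected using hypothesis 2(b), and what is then needed is not Sard's theorem about critical values of that reduced map but a dimension-counting (``mini-Sard'') argument showing that the projection to $Y$ of its zero set is meager --- the inequality $\dim\{\Ima Df\setminus \Ima \frac{\partial f}{\partial x}\}\geq q+\dim\ker\frac{\partial f}{\partial x}$ is precisely what makes that count close, and you never carry it out; you explicitly defer it to ``Henry's key refinement,'' i.e.\ to the result being proved. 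Second, the closedness of $\dot{Y}_{crit}$: your stability-of-surjectivity argument only shows that regularity persists for $y$ near $y_0$ at solutions $x$ near a fixed $x_0$; to conclude that the complement of $\dot{Y}_{crit}$ is open one must rule out critical points $(x_n,y_n)\in f^{-1}(\zeta)$ with $y_n\to y_0$ and $x_n$ leaving every compact set, and this requires the properness of the projection $f^{-1}(\zeta)\to Y$ that Henry assumes (and that the paper's remark is at pains to supply); your sketch neither establishes nor invokes it. As written, the proposal is a plausible outline of Henry's argument rather than a proof: the two load-bearing steps, case 2(b) and properness/closedness, are missing.
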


\begin{remark}
Theorem \ref{T1} is an immediate corollary of Theorem 5.4 in \cite{H05}. The assumption that $X,Y$ are separable implies that the projection $(x,y) \rightarrow y: f^{-1}(\zeta)\rightarrow Y$ is proper (hence $\sigma-$proper), fulfilling the corresponding hypothesis.  Furthermore, condition (1) is equivalent to the semi-Fredholm condition with index $<k$ in in the original theorem.
\end{remark}

\subsection{First evaluation map}

Consider $f\in \HO$ with $u \in \WO$ as its harmonic extension. If $f$ is an eigenfunction corresponding to a non-zero \S eigenvalue $\lambda$, then $f$ is non-constant by \eqref{DN} and \eqref{he}. Furthermore, Green's formula yields 
\[\int_{\PO}f dA=\frac{1}{\lambda}\int_{\PO}u_n \cdot 1 dA=\frac{1}{\lambda}\int_{\O} \Delta u \,dV=0.\]
So we introduce the following space in order to apply Theorem \ref{T1}:
 \begin{align*}
 S_k^p(\PO)&=\left\{f \in \HO: \int_{\PO}f dA=0\right\}. 
 \end{align*}
And define the evaluation map as follows: 
\begin{align}\label{ev1}
\begin{aligned}
\phi:  S^p_k(\PO)\times R\times \E(\O)& \rightarrow \ho\\
(f, \lambda, h)&\rightarrow (L_h -\lambda) f=h^{\ast}\left((\Lambda-\lambda)(h^{-1})^{\ast}f\right).
\end{aligned}
\end{align}

The corresponding  tangent space can be characterized as
\begin{align*}
&S^p_k(\PO) \times \R \times C^m(\O, \R^n)|_{(f, \lambda, h)}&= \bigg\{
(\dot{f}, \dot{\lambda}, \dot{h}):\dot{f} \in \HO, \int_{\PO}\dot{f}  dA=0,\\
& \dot{\lambda} \in \R, \dot{h}\in C^m(\O, \R^n) \bigg\}.
\end{align*} 
And the differential of $\phi$ is given as follows: 
\begin{align}\label{DP}
D \phi_{(f, \lambda, h)}(\dot{f}, \dot{\lambda}, \dot{h})= L_h \dot{f}-\dot{\lambda} f+D_h (L_hf)(\dot{h}) .
\end{align} Here $D_h (L_h f)(\dot{h})$ denotes the variation of $L_h f=h^{\ast}\left((\Lambda-\lambda)(h^{-1})^{\ast}f\right)$ with respect to $h$ in the direction of $\dot{h}$, i.e., \[D_h (L_h f)(\dot{h})=\frac{\partial L_h(f)}{\partial t} |_{x=const., t=0}\] in Lemma \ref{lemma:v2}. 

For convenience, we will denote the differential of $\phi$ in the direction of $S^p_k(\PO) \times \mathbb{R}$ by $D_1$ and the differential in the direction $\E(\O)$ by $D_h$. That is,  
\begin{align}\label{Dif1}
D\phi=&(D_1 \phi, D_h \phi)\,\, \text{at $(f, \lambda, h)$}\\
\text{with}&\left\{
\begin{aligned}\label{Dif2}
&D_1 \phi(\dot{f}, \dot{\lambda})=D\phi(\dot{f}, \dot{\lambda},0)=(L_h-\lambda) \dot{f}-s \dot{\lambda}\\
&D_h \phi(\dot{h})=D\phi(0, 0, \dot{h})=(D_h L_h)(\dot{h}) f.
\end{aligned}
\right.
\end{align}

By the definition of $\phi$ \eqref{ev1}, we notice that $(f, \lambda, g) \in \phi^{-1}(0)$ is equivalent to $h^{\ast}\left((\Lambda-\lambda)(h^{-1})^{\ast}f\right)=0$ on $\PO$. That is, $(\Lambda-\lambda)(h^{-1})^{\ast}f=0$ on $\PO_h$. Thus the following lemma follows immediately.
\begin{lemma}\label{eva1}
$(f, \lambda, g) \in \phi^{-1}(0)$ if and only if $f_h=(h^{-1})^{\ast}f$ is an eigenfunction corresponding to the Steklov eigenvalue $\lambda\neq 0$ on $\O_h$.
%$\phi (f, \lambda, h)=0$ if and only if $\Lambda f_h=\lambda f_h$ on $\PO_h$ with $\lambda \neq 0$. Here $f_h=(h^{-1})^{\ast}f$.
\end{lemma}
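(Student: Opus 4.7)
The lemma is essentially a tautology, and my plan is to extract it directly from the definitions of $\phi$ and $L_h$ in \eqref{ev1}, with one small point to check. The main observation is that the pullback map $h^{\ast}$ is a bijection between the corresponding Sobolev spaces on $\PO_h$ and $\PO$, since $h\colon \PO\to \PO_h$ is a $C^m$ diffeomorphism. Consequently $\phi(f,\lambda,h) = h^{\ast}\bigl((\Lambda-\lambda)f_h\bigr)$ vanishes on $\PO$ if and only if $(\Lambda-\lambda)f_h$ vanishes on $\PO_h$. This rewrites $\phi^{-1}(0)$ exactly as the set of triples $(f,\lambda,h)$ for which $f_h$ satisfies the eigenvalue equation $\Lambda f_h = \lambda f_h$ on $\PO_h$, which gives both directions of the biconditional.

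The only step that needs a short argument is ruling out $\lambda = 0$. My plan is as follows: if $\lambda=0$ and $\Lambda f_h = 0$, the harmonic extension $u^h$ from \eqref{eqn:u^h} has vanishing outward normal derivative, so Green's identity on $\O_h$ forces $\nabla u^h \equiv 0$, and hence $u^h$ is constant. Then $f_h$, and therefore $f = h^{\ast}f_h$, is constant on the boundary, and the defining mean-zero condition $\int_{\PO} f\,dA = 0$ of $S_k^p(\PO)$ forces $f\equiv 0$. Thus for nonzero $f$ the eigenvalue $\lambda$ is automatically nonzero, and $f_h$ is a genuine Steklov eigenfunction. No substantial obstacle arises; the content of the lemma is purely to repackage the zero set of the evaluation map into a form convenient for the application of the transversality Theorem~\ref{T1} in the next subsection.
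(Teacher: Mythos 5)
Your proof is correct and follows the same route as the paper: the lemma is read off directly from the definition \eqref{ev1}, using that $h^{\ast}$ is invertible so that $h^{\ast}\bigl((\Lambda-\lambda)f_h\bigr)=0$ on $\PO$ is equivalent to $(\Lambda-\lambda)f_h=0$ on $\PO_h$. Your extra paragraph ruling out $\lambda=0$ via Green's identity and the mean-zero normalization of $S_k^p(\PO)$ makes explicit exactly the point the paper handles implicitly when it introduces $S_k^p(\PO)$ just before the lemma, so it is a welcome (if not strictly new) addition.
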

%\begin{proof}
%$(f, \lambda, g) \in \phi^{-1}(0)$ is equivalent to $h^{\ast}\left((\Lambda-\lambda)(h^{-1})^{\ast}f\right)=0$ on $\PO$. That is, $(\Lambda-\lambda)(h^{-1})^{\ast}f=0$ on $\PO_h$.
%\end{proof}

\begin{lemma}\label{eva2}
For $h \in \E(\O)$, $0$ is a regular value of $\phi(, h)$ if and only if all non-zero Steklov eigenvalues on $\O_h$ are simple.
\end{lemma}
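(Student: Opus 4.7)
The plan is to use Lemma \ref{eva1} to identify $\phi^{-1}(0)|_h$ with pairs $(f,\lambda)$ whose pull-backs $f_h=(h^{-1})^{\ast}f$ are Steklov eigenfunctions on $\O_h$ with eigenvalue $\lambda\neq 0$, and then analyze when the partial differential
\[
D_1\phi(\dot f,\dot\lambda) \;=\; (L_h-\lambda)\dot f \;-\; f\,\dot\lambda
\]
from \eqref{Dif2} is surjective as a map $S^p_k(\PO)\times\R\to\ho$. The key structural input is that $L_h=h^{\ast}\Lambda h^{\ast-1}$ is a first-order elliptic pseudodifferential operator on $\PO$ which is self-adjoint with respect to the inner product pulled back from $L^2(\partial\O_h)$; consequently $L_h-\lambda$ is Fredholm of index zero, and its cokernel is canonically identified, via the pulled-back pairing $\langle\cdot,\cdot\rangle_h$, with the eigenspace $E_\lambda=\ker(L_h-\lambda)$, whose dimension equals the multiplicity of $\lambda$.

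The core step is to post-compose $D_1\phi$ with the projection $\pi:\ho\to\cok(L_h-\lambda)\cong E_\lambda$. The term $(L_h-\lambda)\dot f$ lies in $\Ima(L_h-\lambda)$ and is annihilated by $\pi$, while $-f\dot\lambda$ projects to $-\dot\lambda\cdot[f]\in E_\lambda$. Hence $\Ima(\pi\circ D_1\phi)=\R\cdot f\subseteq E_\lambda$. When $\lambda$ is simple this is all of $E_\lambda$, and together with $\Ima(L_h-\lambda)\subseteq\Ima(D_1\phi)$ it yields surjectivity of $D_1\phi$. For the converse, if $\lambda$ has multiplicity at least two I would pick $g\in E_\lambda$ with $g\neq 0$ and $\langle g,f\rangle_h=0$, and verify by self-adjointness that
\[
\langle D_1\phi(\dot f,\dot\lambda),\, g\rangle_h \;=\; \langle\dot f,\,(L_h-\lambda)g\rangle_h \;-\; \dot\lambda\,\langle f,g\rangle_h \;=\; 0
\]
for every $(\dot f,\dot\lambda)$, producing a nontrivial cokernel element and ruling out surjectivity.

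The main obstacle I anticipate is reconciling the constrained domain $S^p_k(\PO)$ with the pulled-back self-adjointness of $L_h$: the inner product $\langle\cdot,\cdot\rangle_h$ carries the boundary Jacobian of $h$ as a weight, whereas $S^p_k(\PO)$ is defined by the unweighted mean-zero condition $\int_{\PO} f\,dA=0$. A small amount of bookkeeping is required to verify that passing from $\HO$ to $S^p_k(\PO)$ in the $\dot f$ variable does not introduce any cokernel obstruction beyond the one captured by $\R\cdot f$. The essential reason this causes no trouble is that $L_h$ kills constants, so the one-dimensional discrepancy between $\HO$ and $S^p_k(\PO)$ maps under $L_h-\lambda$ into $\R$, and this constant direction is then recovered through the free parameter $\dot\lambda$ acting on $f$, which is non-constant since $\lambda\neq 0$ precludes $f$ from being a nonzero constant.
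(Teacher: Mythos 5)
Your core argument is the same as the paper's: at a zero $(f,\lambda)$ of $\phi(\cdot,h)$ you write $D_1\phi(\dot f,\dot\lambda)=(L_h-\lambda)\dot f-\dot\lambda f$, use self-adjointness and ellipticity to identify the cokernel of $L_h-\lambda$ with the eigenspace $E_\lambda$, and conclude that surjectivity amounts to $\mathrm{span}(f)=E_\lambda$; the paper does exactly this after transferring to $\O_0=h_0(\O)$ and using the decomposition $W^{k-1-\frac{1}{p},p}(\PO_0)=\Ima(\Lambda-\lambda)\oplus\ker(\Lambda-\lambda)$. Your converse direction (testing against $g\in E_\lambda$ with $\langle g,f\rangle_h=0$) is correct and is the same mechanism the paper uses.

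The step you yourself flagged as ``bookkeeping'' is, however, where your forward direction breaks. The containment $\Ima(L_h-\lambda)\subseteq\Ima(D_1\phi)$ is false on the constrained domain: the constants lie in $\Ima(L_h-\lambda)$ only through preimages of the form $-\tfrac{1}{\lambda}+(\text{eigenfunction})$, and (say at $h=i_{\O}$, to which the paper always reduces) every such preimage has nonzero mean, hence is excluded from $S^p_k(\PO)$. Your proposed repair --- that the lost constant direction is ``recovered through $\dot\lambda$ acting on $f$'' --- cannot work, because $f$ is the pull-back of an eigenfunction for $\lambda\neq 0$, so $\langle f,1\rangle_h=0$: pairing a putative identity $1=(L_h-\lambda)\dot f-\dot\lambda f$ with $f$ forces $\dot\lambda=0$, and then no mean-zero $\dot f$ solves $(L_h-\lambda)\dot f=1$. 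Thus, as a map into all of $\ho$, $D_1\phi$ misses the constant direction regardless of the multiplicity of $\lambda$; the stated equivalence only holds if surjectivity is measured into the mean-zero (codimension-one) subspace of $\ho$ in which $\phi(\cdot,i_{\O})$ and its differential actually take values, or if the mean-zero constraint on $\dot f$ is dropped. (For what it is worth, the paper's own proof silently replaces the image of $L_{h_0}-\lambda$ on $S^p_k(\PO)$ by the image of $\Lambda-\lambda$ on the full space, so it glosses the same point; you correctly isolated the real subtlety, but the resolution you give is not valid.)
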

\begin{proof}
Fix some $h_0 \in \E(\O)$. Let $\O_0=h_0(\O)$. To show $0$ is a regular value of $\phi_0=\phi(, h_0)$, it is sufficient to prove that $D\phi_0$ is surjective at every $(f, \lambda)\in \phi_0^{-1}(0)$. 

Since $\phi_0(f, \lambda)=(L_{h_0}-\lambda)f$, there is
\begin{align*}
D\phi_0(\dot{f}, \dot{\lambda})&=(L_{h_0}-\lambda) \dot{f}-\dot{\lambda}f, \, (\dot{f}, \dot{\lambda})\in S^p_k(\PO)\times \R|_{(f, \lambda)}.
\end{align*} It follows that 
\begin{align*}
\Ima D\phi_0|_{(f, \lambda)}&=\Ima (L_{h_0}-\lambda)\oplus \rm{span} (f)\\
&=h_0^{\ast} \left\{\Ima (\Lambda-\lambda)|_{\O_0}\oplus \rm{span} ((h_0^{-1})^{\ast}f)\right\}.
\end{align*} Thus $D\phi_0$ is surjective at $(f, \lambda) \in \phi_0^{-1}(0)$ if and only if
\begin{align*}\Ima (\Lambda-\lambda)|_{\O_0}\oplus \rm{span} ((h_0^{-1})^{\ast}f&=W^{k-1-\frac{1}{p},p}(\PO_0).
\end{align*}

On the other hand, $\Lambda -\lambda$ is a self-adjoint elliptic operator on $\O_0$. Then there is\begin{align*}
W^{k-1-\frac{1}{p},p}(\PO_0)&=\rm{Im} (\Lambda- \lambda)|_{\O_0}\oplus \ker (\Lambda- \lambda)|_{\O_0}.
\end{align*} Therefore, $D\phi_0$ is surjective at $(f, \lambda) \in \phi_0^{-1}(0)$ if and only if
\begin{align*}
\rm{span} ((h_0^{-1})^{\ast}f)&=\ker (\Lambda- \lambda)|_{\O_0},
\end{align*} that is, $\lambda$ is a simple eigenvalue of $\Lambda$ on $\O_0$. The statement follows. 

\end{proof}

By Lemma \ref{eva2}, the main theorem will follow if we can show that  there exists a residual set of $h\in \E(\O)$ such that $\phi(\cdot, h)$ has $0$ as a regular value. To this end, we aim to apply Theorem \ref{T1} to the evaluation map $\phi$. Let $X=S^k_p\times \R, Y=C^m(\O, \R^n)$ which are separable Banach manifolds. Let $A=S^k_p\times \R\times \E(\O)$, an open set in $X\times Y$ and $\zeta=0$. Then 
\[\dot{Y}_{crit}=\{h\in \E(\O)\,| \,\text{0 is a critical value of $\phi(,h)$}\}.\] 
And its complement is $\{h\in \E(\O)\, |\, \text{0 is a regular value of $\phi(,h)$}\}$. 

We first verify the Fredholm condition for $\phi$.
\begin{lemma}\label{Fred} 
At $(f, \lambda, h)\in \phi^{-1}(0)$, the map
$D_1 \phi: S^k_p|_{(f, \lambda)} \times \R \rightarrow T$ is Fredhlom with index 0.
\end{lemma}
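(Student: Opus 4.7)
The plan is to decompose $D_1\phi$ into operations whose Fredholm behavior is standard: conjugation of $\Lambda - \lambda$ on the perturbed domain $\O_h$, followed by restriction to the codimension-one mean-zero subspace $S^k_p(\PO)$, and finally a rank-one extension corresponding to the $\R$-factor carrying $\dot{\lambda}$.

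First I would show that the unrestricted operator $L_h - \lambda : \HO \to \ho$ is Fredholm of index zero. By definition $L_h = h^*\,\Lambda\,(h^{-1})^*$, so $L_h - \lambda$ is conjugate via the topological isomorphism $h^*$ to $\Lambda - \lambda$ acting on $W^{k-\frac{1}{p},p}(\PO_h)$. Since $\Lambda$ is a first-order self-adjoint elliptic pseudodifferential operator on the closed manifold $\PO_h$, standard elliptic theory gives that $\Lambda - \lambda$ is Fredholm of index zero between the corresponding Sobolev spaces. The $C^m$-regularity of $h$ with $m \geq k+1$ ensures the pullback acts as a bi-continuous isomorphism on the relevant boundary Sobolev spaces, so both Fredholmness and vanishing index transfer to $L_h - \lambda$.

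Next I would restrict to the closed codimension-one subspace $S^k_p(\PO) \subset \HO$. The general fact that the restriction of a Fredholm operator to a closed subspace of finite codimension $d$ is again Fredholm, with index lowered by exactly $d$, yields that $(L_h - \lambda)|_{S^k_p(\PO)} : S^k_p(\PO) \to \ho$ is Fredholm of index $-1$.

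Finally, rewriting
\[
D_1 \phi(\dot{f}, \dot{\lambda}) \;=\; (L_h - \lambda)\,\dot{f} \;+\; (-f)\,\dot{\lambda},
\]
$D_1\phi$ is the rank-one extension of $(L_h - \lambda)|_{S^k_p(\PO)}$ by the one-dimensional factor $\R \ni \dot{\lambda} \mapsto -\dot{\lambda}\,f \in \ho$. Such a finite-rank enlargement of the domain preserves Fredholmness and raises the index by one, so $\ind D_1\phi = -1 + 1 = 0$, as required. The main subtlety is the index-bookkeeping for the codimension-one restriction and the rank-one extension; both are elementary facts about Fredholm operators, so I do not anticipate any substantial technical obstacle.
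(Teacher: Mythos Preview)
Your proposal is correct and follows essentially the same approach as the paper's own proof: both arguments rest on the fact that $\Lambda-\lambda$ (equivalently, its conjugate $L_h-\lambda$) is self-adjoint elliptic and hence Fredholm of index $0$, then account for the codimension-one restriction to $S^p_k(\PO)$ (index drops by $1$) and the extra $\R$-factor carrying $\dot\lambda$ (index rises by $1$). The only difference is presentational---the paper computes the kernel and cokernel explicitly using $f_h\perp\Ima(\Lambda-\lambda)$, whereas you invoke the abstract index arithmetic for codimension-one restrictions and rank-one extensions---but the underlying argument is the same.
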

\begin{proof}
At $(f, \lambda, h)\in \phi^{-1}(0)$, there is
\begin{align*}
D_1 \phi|_{(f, \lambda, h)}(\dot{f}, \dot{\lambda}, \dot{h})&=(L_h-\lambda)\dot{f}-\dot{\lambda} f,\\
&=h^{\ast}\left((\Lambda-\lambda) \dot{f}_h- \dot{\lambda}f_h\right)
\end{align*} with $\dot{f}_h=h^{\ast-1}(\dot{f})$. Then 
\begin{align*}
\ker D_1\phi|_{(f, \lambda, h)}&=h^{\ast}\left(\ker (\Lambda-\lambda) \right)\oplus \{0\},\\
\Ima D_1 \phi|_{(f, \lambda, h)}&=h^{\ast}\left(\Ima (\Lambda-\lambda)\right)\oplus \rm{span} (f)
\end{align*} with $(\Lambda-\lambda)$ acting on $\PO_h$. We use the fact that $f_h\bot \Ima (\Lambda-\lambda)$ to get the second equality, since $f_h\in \ker (\Lambda-\lambda)$.

At the same time, the operator $(\Lambda-\lambda)|_{\PO_h}: W^{k-\frac{1}{p}}(\PO_h)\rightarrow W^{k-1-\frac{1}{p}}(\PO_h)$ is Fredholm with index $0$ for a fixed $\lambda$. The restriction on $S^p_k(\PO_h)$ decreases the index by 1. Therefore, $\dim \ker D_1 \phi|_{(f, \lambda, h)}=\dim \ker (\Lambda-\lambda)$ is finite dimensional. And $\Ima  D_1 \phi$ is closed since $\Ima (\Lambda-\lambda)$ and $ \rm{span} (f)$ are closed. Moreover, $\dim \cok D_1\phi|_{(f, \lambda, h)}=\dim \cok (\Lambda-\lambda)-1$. Therefore $D_1\phi$ is Fredholm with index $0$.
\end{proof}

We attempted to verify the condition 2(a) in Theorem \ref{T1}, namely that $0$ is a regular value of $\phi$, but failed. Instead, we obtain a special property for critical points of $\phi$.

\begin{prop}\label{idc}
If $(f, \lambda, h)$ is a critical point of $\phi$, then there exists non-trivial $\psi \in \HO$ such that its pull back $\psi_h=h^{\ast-1} (\psi)$ is a Steklov eigenfunction corresponding to $\lambda$ which is orthogonal to $f_h$ on $\PO_h$ and satisfies :
\begin{align}\label{eqn: density2}
&\nabla_{\PO} f_h\cdot \nabla_{\PO}\psi_h-(H +\lambda )\lambda f_h\psi_h=0, \, \PO_h.
\end{align} Here $H=\nabla \cdot \n$ is the mean curvature on $\PO_h$.
\end{prop}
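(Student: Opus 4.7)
The plan is to interpret the critical-point condition as non-surjectivity of $D\phi$, use self-adjoint Fredholm duality to produce a nontrivial cokernel representative $\psi$, and then convert the orthogonality of $\psi$ against $D_h\phi$ into the pointwise identity \eqref{eqn: density2}. By the remark following Lemma \ref{var of n}, it suffices to treat the base case $h=i_{\O}$, so that $f_h=f$, $\psi_h=\psi$, and $\L f=\lambda f$ on $\PO$.

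From Lemma \ref{Fred}, $D_1\phi$ is Fredholm of index $0$ with closed image $\Ima(\L-\lambda)\oplus \mathrm{span}(f)$. Since $D_h\phi$ is a bounded perturbation into the same target, $\Ima D\phi$ is still closed and of finite codimension, so criticality of $(f,\lambda,i_{\O})$ is equivalent to the existence of a nonzero element $\psi$ in its $L^2$-orthogonal complement. Self-adjointness of $\L$ identifies $\Ima(\L-\lambda)^{\perp}$ with $\ker(\L-\lambda)$, hence $\psi \in \ker(\L-\lambda) \subset \HO$ is a Steklov eigenfunction for $\lambda$, with the $\HO$-regularity furnished by the elliptic theory for $\L$. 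Orthogonality to $\mathrm{span}(f)$ gives $\int_{\PO}\psi f \, dA = 0$, and orthogonality to the image of $D_h\phi$ translates into
$$\int_{\PO} \psi \cdot D_h(L_h f)(\dot h)\, dA = 0 \qquad \text{for every } \dot h \in C^m(\O,\R^n).$$

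For the pointwise identity I feed in normal variations $\dot h = \sigma\n$ with $\sigma \in C^m(\PO)$ and invoke formula \eqref{eqn:v4 of Lh}:
$$D_h(L_h f)(\sigma\n) = -\sigma(\Delta_{\PO}f + H\lambda f) - \nabla_{\PO}\sigma \cdot \nabla_{\PO} f - \L(\sigma\lambda f).$$
Integration by parts on the closed manifold $\PO$ converts $-\int_{\PO}\psi\,\nabla_{\PO}\sigma\cdot\nabla_{\PO}f\, dA$ into $\int_{\PO}\sigma(\nabla_{\PO}\psi\cdot\nabla_{\PO}f + \psi\Delta_{\PO}f)\, dA$, while self-adjointness of $\L$ together with $\L\psi = \lambda\psi$ turns $-\int_{\PO}\psi\,\L(\sigma\lambda f)\, dA$ into $-\lambda^2\int_{\PO}\sigma f\psi\, dA$. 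The two $\psi\Delta_{\PO}f$ contributions cancel, and arbitrariness of $\sigma$ forces the pointwise identity
$$\nabla_{\PO} f \cdot \nabla_{\PO}\psi - (H+\lambda)\lambda f\psi = 0 \qquad \text{on } \PO,$$
which is precisely \eqref{eqn: density2}.

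A quick check disposes of tangential variations: for $\dot h \perp \n$, formula \eqref{eqn:v2 of Lh} reduces to $\lambda \nabla_{\PO} f \cdot \dot h - \L(\nabla_{\PO} f \cdot \dot h)$, whose $L^2$-pairing with $\psi$ vanishes by self-adjointness of $\L$ and $\L\psi = \lambda\psi$; normal variations therefore already encode the full critical-point information. The main technical point is justifying the closedness of $\Ima D\phi$ so that the duality representative $\psi$ is realized by a genuine function; this rests on the Fredholm property of $D_1\phi$ combined with the fact that any subspace of the finite-dimensional quotient $\ho/\Ima(D_1\phi)$ is automatically closed.
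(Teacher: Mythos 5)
Your proposal is correct and follows essentially the same route as the paper's proof: reduce to $h=i_{\O}$, show $\Ima D\phi$ is closed because it contains a closed finite-codimension subspace, extract a nontrivial $\psi$ in the orthogonal complement, read off $(\Lambda-\lambda)\psi=0$ and $\int_{\PO}\psi f=0$ from the $\dot f$ and $\dot\lambda$ directions, and then test normal variations $\dot h=\sigma\n$ with \eqref{eqn:v4 of Lh}, integration by parts and self-adjointness to get \eqref{eqn: density2}. Your additional observation that tangential variations pair to zero with $\psi$ is a harmless extra check not needed in (and not present in) the paper's argument.
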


\begin{proof}
Suppose that $(f, \lambda, h_0)$ is a critical point of $\phi$. Let $\O_0=h_0(\O)$. We can transfer the origin from $\O$ to $\O_0$ by defining the corresponding evaluation map $\tilde{\phi} (g, \lambda, h)=h^{\ast}\left((\Lambda-\lambda)h^{\ast-1}(g)\right)$ for $h\in \E(\O_0)$ and $g\in S^p_k(\PO_0)$. Then $\tilde{\phi} (g, \lambda, h)=\phi(h_0^{\ast-1}(g), \lambda, h\circ h_0)$. Moreover $(f, \lambda, h_0)$ is a critical point of $\phi$ if and only if $(h_0^{\ast-1}(f), \lambda, i_{\O_0})$ is a critical point of $\tilde{\phi}$. 

Therefore, without loss of generality, we assume that $(f, \lambda, i_{\O})$ is a critical point of $\phi$. We will make such change of origin without comments in later proofs.

First, we notice that $D\phi|_{(f, \lambda, i_{\O})}$ has a closed image. In fact, by \eqref{Dif1} and \eqref{Dif2}, we have that 
\begin{align*}
D\phi|_{(f, \lambda, i_{\O})}(\dot{f},0,0)&= (\Lambda -\lambda)\dot{f}.
\end{align*} That is, $\Ima(\Lambda -\lambda) \subset \Ima D\phi|_{(f, \lambda, i_{\O})}$. Since $\Ima(\Lambda -\lambda)$ is closed with finite co-dimension, $\Ima D\phi|_{(f, \lambda, i_{\O})}$ is also closed.

Thus $D\phi|_{(f, \lambda, i_{\O})}$ is not surjective if and only if the orthogonal complement of $\Ima D\phi_{(f, \lambda, i_{\O})}$ is non-trivial. That is, 
there exists a non-trivial $\psi \in \left(\rm{Im} D\phi|_{(f, \lambda, i_{\O})}\right)^{\bot}$.
By \eqref{DP}, for any $(\dot{f}, \dot{\lambda}, \dot{h})\in TS^k_p \times \R\times C^m(\O, \R^n)$, such $\psi$ satisfies
\begin{align}\label{ort}
0&=\int_{\PO}\psi\left((\Lambda -\lambda) \dot{f}-\dot{\lambda} f+(D_h L_h)(\dot{h}) f\right).
\end{align}

When $\dot{\lambda}=0, \dot{h}=0$ in \eqref{ort}, since $\Lambda$ is self-adjoint, it follows that 
\begin{align*}
0&=\int_{\PO}\psi\left((\Lambda -\lambda) \dot{f}\right)\\
&=\int_{\PO}\big((\Lambda -\lambda)\psi\big) \dot{f}
\end{align*} for any $\dot{f}\in C^m(\PO, \R^n)$. Thus, $(\Lambda -\lambda)\psi=0$, i.e., $\psi$ is a Steklov eigenfunction with corresponding to $\lambda$.

When $\dot{f}=0, \dot{h}=0$ in \eqref{ort}, it follows that 
\begin{align*}
0&=\int_{\PO}\psi\left(\dot{\lambda} f\right)=\dot{\lambda}\int_{\PO} \psi f
\end{align*} for any $\dot{\lambda}\in \R$. Thus $\int_{\PO} \psi f=0$, i.e., $\psi$ is orthogonal to $f$.

When $\dot{f}=0, \dot{\lambda}=0$ in \eqref{ort}, it follows that 
\begin{align}\label{density1}
0&=\int_{\O}\psi\left(D_h (L_h)(\dot{h})f\right)=\int_{\O}\psi\left(\frac{\partial L_h(f)}{\partial t}|_{x=const., t=0}\right)
\end{align} for any $\dot{h}\in C^m(\O, \R^n)$. 
To characterize the condition \ref{density1}, we choose $h(\cdot, t)\in \E(\O)$ such that \begin{align*}
\left\{
\begin{aligned}
&h(\cdot,0)=i_{\O},\\
&\frac{\partial}{\partial t}h(\cdot,0)=\dot{h}.
\end{aligned}\right.
\end{align*} 
and $\dot{h}|_{\PO}=\sigma \n$ for some $\sigma \in C^m(\PO)$. By \eqref{eqn:v4 of Lh}, there is 
\begin{align*}
\frac{\partial L_h(f)}{\partial t}|_{x=const., t=0}&=-\sigma \left(\Delta_{\PO}f+H(\Lambda f)\right)-(\nabla_{\PO}\sigma)\cdot (\nabla_{\PO} f)-\Lambda\left(\sigma (\Lambda f)\right),\\
&=-\sigma \left(\Delta_{\PO}f+H\lambda f\right)-(\nabla_{\PO}\sigma)\cdot (\nabla_{\PO} f)-\Lambda\left(\sigma \lambda f\right).
\end{align*}
Plug this formula into \ref{density1}. It follows that
\begin{align*}
0&=\int_{\PO} \psi \left(\frac{\partial L_h(f)}{\partial t}|_{x=const., t=0}\right),\\
&=\int_{\PO} \psi \left(-\sigma \left(\Delta_{\PO}f+H\lambda f\right)-(\nabla_{\PO}\sigma)\cdot (\nabla_{\PO} f)-\Lambda\left(\sigma \lambda f\right)\right),\\
&=\int_{\PO}(-\Delta_{\PO}f) (\psi\sigma) -\lambda \psi H f\sigma-\psi(\nabla_{\PO}\sigma)\cdot (\nabla_{\PO} f) -\lambda \psi \Lambda( f\sigma),\\
&=\int_{\PO}(\nabla_{\PO}f)\cdot (\nabla_{\PO} \psi)\sigma-\lambda \psi H f\sigma-\lambda \psi \Lambda( f\sigma)\\
&=\int_{\PO} \left(\nabla_{\PO} f\cdot \nabla_{\PO}\psi-\lambda f(H\psi+(\Lambda \psi))\right)\sigma.
\end{align*} The second last equality follows from Green's formula. The last equality use the fact that $\Lambda$ is self-adjoint. Since $\sigma$ is arbitrary and $(\Lambda-\lambda)\psi=0$, it follows
\begin{align*}
&\nabla_{\PO} f\cdot \nabla_{\PO}\psi-(H \psi+\Lambda  \psi)\lambda f=0\\
\Rightarrow& \nabla_{\PO} f\cdot \nabla_{\PO}\psi-(H +\lambda )\lambda f \psi=0.
\end{align*}  Then \eqref{eqn: density2} follows with the change of origin. 
\end{proof}

\begin{remark} The condition 2(b) in Theorem \ref{T1} does not hold for $\phi$ since $D_1 \phi$ has non-negative index at $(f, \lambda, h)\in \phi^{-1}(0)$. In fact, since \[\dim \cok D_1\phi \geq\dim \left\{\frac{\Ima (D\phi)}{\Ima(D_1 \phi)}\right\}\] at $(f, \lambda, h)$, there is 
\begin{align*}
\dim \left\{\frac{\Ima (D\phi)}{\Ima(D_1 \phi)}\right\}&\leq \dim \cok D_1\phi=\dim \ker D_1\phi-\ind D_1\phi \leq \dim \ker D_1\phi
\end{align*} at $(f, \lambda, h)$. Thus the condition 2(b) cannot hold for any positive integer $q$. 
\end{remark}

\subsection{Second evaluation map}
In this subsection, we show that condition \ref{eqn: density2} is non-generic. In particular, there exists an open and dense subset $E_1\subset \E(\O)$ such that the restriction of $\phi$ to $E_1$ has $0$ as a regular value. To prove this, we begin by introducing another evaluation map:

\begin{align}\label{ev2}
\begin{aligned}
\Phi:  &S^p_k(\PO)\times S^p_k(\PO)\times R\times \E(\O) \rightarrow \ho\times \ho\times \ho\\
&(f,\phi, \lambda, h)\rightarrow \left(\phi(f, \lambda, h), \phi(\psi, \lambda, h), \Psi(f, \psi, \lambda, h)\right)
\end{aligned}
\end{align} with
\begin{align*}
\left\{
\begin{aligned}
\phi(f, \lambda, h)&=h^{\ast}(\Lambda-\lambda) (h^{-1})^{\ast} f;\\
\phi(\psi, \lambda, h)&=h^{\ast}(\Lambda-\lambda) (h^{-1})^{\ast} \psi;\\
\Psi(f, \psi, \lambda, h)&=h^{\ast}\left(\nabla u^h\cdot \nabla w^h-(\nabla \cdot \n_h+2\lambda)\lambda  u^hw^h\right)|_{\PO}.
\end{aligned}\right.
\end{align*} Here $u^h, w^h$ denote the harmonic extension of $f_h, \psi_h$ respectively. 

The operator $\Psi$ comes from \eqref{eqn: density2}. In fact, when $\phi(f, \lambda, h)=\phi(\psi, \lambda, h)=0$, it follows that 
\begin{align}\label{Psidef}
\begin{aligned}
(h^{-1})^{\ast}\Psi(f, \psi, \lambda, h)&=\left(\nabla u^h\cdot \nabla w^h-(\nabla \cdot \n_h+2\lambda)\lambda  u^hw^h\right)|_{\PO_h}\\
&=\left((\nabla_{\PO_h}u^h+u^h_{n_h} \n_h)\cdot (\nabla_{\PO_h}w^h+w^h_{n_h} \n_h)-(\nabla \cdot \n_h+2\lambda)\lambda  u^hw^h\right)|_{\PO_h}\\
&=(\nabla_{\PO_h}u^h)\cdot (\nabla_{\PO_h}w^h)|_{\PO_h}+u^h_{n_h}w^h_{n_h}|{\PO_h} -(\nabla \cdot \n_h+2\lambda)\lambda  u^hw^h|_{\PO_h}\\
&=(\nabla_{\PO_h}f_h)\cdot (\nabla_{\PO_h}\psi_h)+\lambda^2 f_h\psi_h -(\nabla \cdot \n_h+2\lambda)\lambda  f_h \psi_h,\\
&=(\nabla_{\PO_h}f_h)\cdot (\nabla_{\PO_h}\psi_h) -(\nabla \cdot \n_h+\lambda)\lambda  f_h \psi_h.
\end{aligned}
\end{align} 

By \eqref{Psidef} and Lemma \ref{ev1}, we obtain the following characterization of the zeros of $\Phi$:
\begin{lemma}\label{zeros}
$\Phi(f, \psi, \lambda, h)=0$ if and only if 
\begin{enumerate}
\item both of $f_h, \psi_h$ are eigenfunction of $\Lambda$ on $\O_h$ corresponding to $\lambda\neq 0$.
\item $(\nabla_{\PO_h}f_h)\cdot (\nabla_{\PO_h}\psi_h) -(\nabla \cdot \n_h+\lambda)\lambda  f_h \psi_h=0$.
\end{enumerate}
\end{lemma}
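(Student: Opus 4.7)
The plan is to peel off the three components of $\Phi$ and handle them sequentially: the first two coordinates are handled by Lemma \ref{eva1}, and the third is handled by the reformulation already carried out in the display \eqref{Psidef} immediately preceding the statement.

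First I would observe that $\Phi(f,\psi,\lambda,h)=0$ is equivalent to the simultaneous vanishing of its three coordinates $\phi(f,\lambda,h)$, $\phi(\psi,\lambda,h)$, and $\Psi(f,\psi,\lambda,h)$. Applying Lemma \ref{eva1} to each of the first two coordinates gives exactly condition (1): both $f_h$ and $\psi_h$ are Steklov eigenfunctions on $\O_h$ with eigenvalue $\lambda\neq 0$. With condition (1) in hand I can then invoke \eqref{Psidef}, which rewrites
\[
(h^{-1})^{\ast}\Psi(f,\psi,\lambda,h)=(\nabla_{\PO_h}f_h)\cdot(\nabla_{\PO_h}\psi_h)-(\nabla\cdot\n_h+\lambda)\lambda\, f_h\psi_h.
\]
Since $(h^{-1})^{\ast}$ is an isomorphism on $\ho$, the vanishing of the third coordinate of $\Phi$ is equivalent to the vanishing of this expression on $\PO_h$, which is condition (2). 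Running the same chain in reverse proves the converse: condition (1) and Lemma \ref{eva1} yield vanishing of the first two coordinates, and condition (2) combined with \eqref{Psidef} yields vanishing of $\Psi$.

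The only subtlety to be careful about — not really an obstacle — is that the simplification in \eqref{Psidef} that reduces $u^h_{n_h}w^h_{n_h}$ to $\lambda^2 f_h\psi_h$ and cancels one factor of $\lambda$ from the mean-curvature term uses $\Lambda f_h=\lambda f_h$ and $\Lambda\psi_h=\lambda\psi_h$. In both directions of the argument condition (1) is established before \eqref{Psidef} is invoked, so no circularity arises, and the lemma follows by a direct unpacking of definitions.
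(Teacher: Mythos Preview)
Your proposal is correct and follows precisely the route the paper indicates: the paper does not give a formal proof but simply states that the lemma follows from \eqref{Psidef} and Lemma~\ref{eva1}, which is exactly the argument you spell out. The only cosmetic point is that $(h^{-1})^{\ast}$ is an isomorphism from $\ho$ to the corresponding space on $\PO_h$ rather than ``on $\ho$'', but this does not affect the argument.
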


We apply Theorem \ref{T1} to $\Phi$ to prove the following theorem.
\begin{thm}\label{MT3}
Let $\O\in \R^n$ be a bounded domain with $C^m$ boundary and $m\geq 2$. Then there exist an open and dense set $E_1\subset \E(\O)$ such that
\begin{align}\label{ev3}
\begin{aligned}
\phi:&S^k_p(\PO)\times \R\times E_1\rightarrow \ho\\
&(f, \lambda, h)\rightarrow h^{\ast}(\Lambda-\lambda) (h^{-1})^{\ast} f
\end{aligned}
\end{align} has zero as a regular value.
\end{thm}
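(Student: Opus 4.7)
\emph{Proof proposal.} The plan is to apply Theorem~\ref{T1} to the second evaluation map $\Phi$ of \eqref{ev2}, with $X = S^p_k(\PO) \times S^p_k(\PO) \times \R$, $Y = C^m(\O, \R^n)$, $A = X \times \E(\O)$, and $\zeta = 0$, and to take $E_1$ to be the complement in $\E(\O)$ of the resulting set $\dot Y_{crit}$. The payoff is the ``moreover'' clause: once $D_1 \Phi$ is left-Fredholm with strictly negative index throughout $\Phi^{-1}(0)$, one obtains
\begin{align*}
\dot Y_{crit} = \{h \in \E(\O) : \Phi(f, \psi, \lambda, h) = 0 \text{ for some } (f, \psi, \lambda) \in X\},
\end{align*}
which is meager and closed; hence $E_1$ is open and dense. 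For any $h \in E_1$ no such triple exists, so by Proposition~\ref{idc} (via Lemma~\ref{zeros}) the map $\phi(\cdot, h)$ admits no critical point, i.e.\ $0$ is a regular value, which is the assertion of Theorem~\ref{MT3}.

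Next I would verify the Fredholm/index hypothesis at a point $(f, \psi, \lambda, h) \in \Phi^{-1}(0)$. Viewing $D_1 \Phi$ as a block operator in the variables $(\dot f, \dot \psi, \dot \lambda)$, its first two components are essentially two copies of the operator treated in Lemma~\ref{Fred}, each of index $0$, coupled through the single scalar $\dot\lambda$; since $f_h$ and $\psi_h$ are linearly independent eigenfunctions (Proposition~\ref{idc} gives $\psi_h \perp f_h$), sharing $\dot\lambda$ drops the joint index by $1$. The third component $\Psi$ maps to another copy of $\ho$ without introducing new domain variables, so the joint index is $\leq -1$; the image remains closed and the kernel finite-dimensional by ellipticity of $\Lambda - \lambda$ together with continuity of the lower-order multiplier and trace terms appearing in $\Psi$. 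This supplies condition~(1) of Theorem~\ref{T1} with $q = 1$.

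The crux is condition~(2). I would aim for the surjectivity condition 2(a): at each $(f, \psi, \lambda, h) \in \Phi^{-1}(0)$, the boundary variation $\dot h$ must reach every direction in $\ho \times \ho \times \ho$ missed by $D_1 \Phi$. The first two components are handled by the variation formula \eqref{eqn:v2 of Lh} applied to $f$ and to $\psi$, in the spirit of Lemma~\ref{eva2}. The genuinely new ingredient is the third, whose $h$-linearization I would compute by differentiating \eqref{Psidef}, using Lemma~\ref{var of n} for the variation of $\n_h$ and Lemma~\ref{lemma:v1} for the variations of the harmonic extensions $u^h$ and $w^h$; this produces a combination of pointwise multipliers of $\sigma$ and $\nabla_{\PO}\sigma$ together with nonlocal terms involving $\Lambda$ applied to products of $\sigma$ with $f_h, \psi_h$. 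The main obstacle is to show that, modulo $\Ima D_1\Phi$, the range of this variation spans the cokernel: the weak unique continuation principle Theorem~\ref{wucp} is invoked to exclude the possibility that $f_h$, $\psi_h$, or their tangential gradients vanish on any open subset of $\PO_h$, which would otherwise let a nonzero functional annihilate every $D_h \Phi(\dot h)$ via integration by parts. Once condition~(2) is verified, the ``moreover'' clause of Theorem~\ref{T1} produces the desired open and dense $E_1$.
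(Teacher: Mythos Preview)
Your overall architecture is exactly the paper's: apply Theorem~\ref{T1} to $\Phi$ with the stated $X,Y,A,\zeta$, invoke the ``moreover'' clause to identify $\dot Y_{crit}$ with the locus where $\Phi(\cdot,h)$ has a zero, set $E_1$ to be its complement, and then use Proposition~\ref{idc} together with Lemma~\ref{zeros} to conclude that $\phi$ has no critical points over $E_1$. That part is right.

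The divergence is in how you verify condition~(2) and the index hypothesis. You aim for 2(a), i.e.\ surjectivity of $D\Phi$, and you estimate the index of $D_1\Phi$ as a finite number $\le -1$. The paper does neither. Instead it proves a single statement (Proposition~\ref{infinite dim}):
\[
\dim\Bigl(\Ima D\Phi \big/ \Ima D_1\Phi\Bigr)=\infty
\quad\text{at every }(f,\psi,\lambda,h)\in\Phi^{-1}(0),
\]
which simultaneously delivers condition 2(b) (for every $q$) and forces $\dim\cok D_1\Phi=\infty$, hence index $=-\infty$, giving the ``moreover'' clause. The mechanism is an order count: by Lemma~\ref{vari of Psi}, $D_h\Psi(\dot h)$ has leading term $\lambda f\psi\,\Delta_{\PO}\sigma$, a \emph{second}-order operator in $\sigma=\dot h\cdot\n$, whereas after solving \eqref{dh1}--\eqref{dh2} for $\dot f,\dot\psi,\dot\lambda$ in terms of $\dot h$, the contribution $D_1\Psi$ is only first order in $\dot h$. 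If the quotient were finite-dimensional, the resulting map $\dot h\mapsto D\Psi$ would have finite rank, forcing $f\psi\equiv 0$ on $\PO$, which is excluded by the weak unique continuation Theorem~\ref{wucp}. That is the actual role of unique continuation here.

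Your 2(a) plan, by contrast, asks for full surjectivity of $D\Phi$ onto $\ho^3$, which is a much stronger and unproven statement; your sketch (``the range of this variation spans the cokernel'') does not supply an argument, and the same $\dot h$ must hit three coupled targets at once, so a density-type argument is not straightforward. Your index bound $\le -1$ is also too weak as stated: adding the $\Psi$-component to the target contributes an entire copy of $\ho$ with only a first-order operator feeding it, so the cokernel is infinite-dimensional, not codimension one. Replacing your 2(a) attempt by the order-counting argument above closes the proof along the paper's lines.
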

\begin{proof}
To apply Theorem \ref{T1} to $\Phi$, let $X=S^p_k(\PO)\times S^p_k(\PO)\times \R, Y=C^m(\O, \R^n)$, both of which are separable, and $A= S^p_k(\PO)\times S^p_k(\PO)\times \R\times\E(\O)$. Let $D_1$ denote the differential in the direction of $S^p_k(\partial\Omega)\times S^p_k(\partial\Omega)\times\mathbb{R}$. 
We also use $D_1$ for the differential of $\phi$ in the direction of $S^p_k(\partial\Omega)\times\mathbb{R}$; the meaning will be clear from context.

Fix $(f,\psi,\lambda,h_0)\in \Phi^{-1}(0)$. Without loss of generality, we may assume that $(f,\psi,\lambda,i_{\O})\in \Phi^{-1}(0)$, since we can always translate the origin from $\Omega$ to $h_0(\Omega)$.

We first show that $D_1\Phi$ is left-Fredholm at each $(f, \psi, \lambda, i_{\O})\in \Phi^{-1}(0)$. For the kernel, there is
\begin{align*}
\ker D_1\Phi|_{(f, \psi, \lambda, i_{\O})}
&=\{(\dot{f}, \dot{\psi}, \dot{\lambda})| D_1\phi(\dot{f}, \dot{\lambda})=0, D_1\phi(\dot{\psi}, \dot{\lambda})=0, D_1 \Psi(\dot{f}, \dot{\psi},\dot{\lambda})=0\}.
\end{align*}
Lemma \ref{Fred} implies that $\dim \ker D_1\phi$ is finite, denoted by $l$. Then above equality implies that $\dim \ker D_1\Phi \leq l^2$.

For the image, there is \[\Ima D_1\Phi|_{(f, \psi, \lambda,  i_{\O})}=\Ima D_1\phi|_{(f, \lambda,  i_{\O})} \oplus\Ima D_1|_{( \psi, \lambda,  i_{\O})}\phi\oplus\Ima D_1\Psi|_{(f, \psi, \lambda,  i_{\O})}.\] Since $\Ima D_1\phi$ is closed by Lemma \ref{Fred}, it suffices to show that $\Ima D_1\Psi$ is closed to conclude that $\Ima D_1\Phi$ is closed. 

Direct calculation yields that
\begin{align}\label{D1}
\begin{aligned}
D_1\Psi(\dot{f}, \dot{\psi}, \dot{\lambda})|_{(f, \psi, \lambda, i_{\O})}&=\left[\nabla w\cdot\nabla- (\nabla\cdot \n+2 \lambda)\lambda w \right]\dot{f}^h|_{\PO}\\
&+\left[\nabla u\cdot\nabla- (\nabla\cdot \n+2 \lambda)\lambda u \right]\dot{\psi}^h|_{\PO}-4\lambda \dot{\lambda} f\psi
\end{aligned}
\end{align} with $u, w, \dot{f}^h, \dot{\psi}^h$ as harmonic extensions of $f, \psi, \dot{f}, \dot{\psi}$ respectively. Thus, $D_1\Psi$ is a linear combination of maps of two types.  
The first type is the map
\[
\dot{\lambda} \;\rightarrow\; -4 \lambda \, \dot{\lambda} \, f\psi,
\] 
which is closed, being simply multiplication of $\dot{\lambda}$ by the fixed function $-4\lambda f\psi$.  

The second type is the map
\[
\dot{f} \;\rightarrow\; \bigl[ \nabla w \cdot \nabla - (\nabla\cdot \n + 2\lambda)\lambda w \bigr] \, \dot{f}^h \big|_{\partial\Omega},
\] 
which is the composition of three operations: the harmonic extension $\dot{f}\mapsto \dot{f}^h$, the gradient, and multiplication by fixed functions.  
Both the gradient and multiplication operators are closed, and the harmonic extension is closed as a continuous map between Banach spaces.  

Therefore, $\Ima D_1\Psi$ is closed. And $D_1\Phi$ is left Fredholm at at $(f, \psi, \lambda, i_{\O})\in \Phi^{-1}(0)$.

To verify the index condition and condition (2) in Theorem \ref{T1}, we make the following claim: at $(f, \psi, \lambda, i_{\O})\in \Phi^{-1}(0)$, there is
\begin{align*}
\dim \left( \frac{\operatorname{Im}(D\Phi)}{\operatorname{Im}(D_1 \Phi)} \right) = \infty.
\end{align*}
Then the condition 2(b) holds. This claim will be proved as Proposition \ref{infinite dim}.  

Since 
\[
\dim \operatorname{coker} D_1 \Phi \;\geq\; \dim \left( \frac{\operatorname{Im}(D\Phi)}{\operatorname{Im}(D_1 \Phi)} \right),
\] 
it follows that the index of $D_1 \Phi$ is $-\infty$ at $(f, \psi, \lambda, i_{\O})\in \Phi^{-1}(0)$. 

Thus, by Theorem \ref{T1}, the subset 
\begin{align*}
\{h \in \E(\O)|\, \text{$\Phi(\cdot, h)$ has $0$ as a critical value}\}
\end{align*} is meager and closed. More over, it is equal to the subset
\begin{align*}
\{h \in \E(\O)|\, \text{ $\Phi(f, \psi, \lambda, h)=0$ for some $(f, \psi, \lambda)\in S^p_k(\PO)\times S^p_k(\PO)\times \R$}\}
\end{align*}whose complement is denoted by $E_1$. Hence $E_1$ is residual and open in $\E(\O)$.

We claim that the restriction of $\phi$ to $E_1$ has $0$ as a regular value:
\begin{align*}
\phi:\; & S^k_p(\partial\Omega) \times \mathbb{R} \times E_1 \;\longrightarrow\; \ho,\\
& (f, \lambda, h) \;\longmapsto\; h^{\ast}(\Lambda - \lambda) (h^{-1})^{\ast} f.
\end{align*}Suppose not and let $(f, \lambda, h) \in \phi^{-1}(0)$ be a critical point of $\phi$ with $h \in E_1$.  
Then, by Proposition \ref{idc} and Lemma \ref{zeros}, there exists $\psi \in S^k_p(\partial\Omega)$ such that $(f, \psi, \lambda, h) \in \Phi^{-1}(0)$.  
This contradicts the fact that $h \in E_1$.

\end{proof}

\begin{remark}
We recall that the harmonic extension is continuous. Indeed, if $u$ denotes the harmonic extension of $f$, it is the solution of the Dirichlet problem:
\begin{align*}
u(x)&=\int_{\PO} f(s) \nabla_{\n} G(x, s) ds, \, x\in \O.
\end{align*} with $G(x,s)$ as the Green's function satisfying Dirichlet boundary condition. 
\end{remark}

\begin{prop}\label{infinite dim}
Assume $(f, \psi, \lambda, i_{\O})\in \Phi^{-1}(0)$. Then 
\begin{align}\label{key}
\dim \left\{\frac{\Ima (D\Phi)}{\Ima(D_1 \Phi)}\right\}&=\infty
\end{align} at $(f, \psi, \lambda, i_{\O})\in \Phi^{-1}(0)$.
\end{prop}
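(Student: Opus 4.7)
The plan is to exhibit infinitely many perturbations $\dot{h}_i \in C^m(\O, \R^n)$ whose images $D_h\Phi(\dot{h}_i)$ are linearly independent modulo $\Ima D_1\Phi$; since $\Ima D\Phi = \Ima D_1\Phi + \Ima D_h\Phi$, this will yield \eqref{key}. Throughout I work at the critical point $(f, \psi, \lambda, i_{\O}) \in \Phi^{-1}(0)$, so $\Lambda f = \lambda f$, $\Lambda \psi = \lambda \psi$, and the density relation from Proposition \ref{idc} holds. Because the boundary variation depends only on $\sigma = \dot{h}\cdot \n$ on $\PO$, it suffices to restrict to normal perturbations $\dot{h} = \sigma\n$ with $\sigma \in C^m(\PO)$. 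The first two components $D_h\phi_1(\sigma\n), D_h\phi_2(\sigma\n)$ are then given directly by \eqref{eqn:v4 of Lh}, and a parallel direct differentiation of \eqref{Psidef} (using Lemma \ref{var of n} for the variation of $\n_h$ and Lemma \ref{lemma:v1} for the variation of the harmonic extensions) yields an explicit formula for $D_h\Psi(\sigma\n)$.

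Next I would analyze when $D_h\Phi(\sigma\n) \in \Ima D_1\Phi$. If $D_h\Phi(\sigma\n) = D_1\Phi(\dot{f}, \dot{\psi}, \dot{\lambda})$, then pairing the first equation with $f$ in $L^2(\PO)$ and using self-adjointness of $\Lambda$ gives $\dot{\lambda} = -\|f\|^{-2}\langle D_h\phi_1(\sigma\n), f\rangle$; the analogous pairing of the second equation with $\psi$ produces a second expression for $\dot{\lambda}$, whose matching imposes one linear constraint on $\sigma$. Pairing each equation against the remaining elements of $\ker(\Lambda-\lambda)$ contributes finitely many further constraints, after which $\dot{f}$ and $\dot{\psi}$ are determined up to elements of $\ker(\Lambda-\lambda)$. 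The third equation then becomes the requirement that the residual
\[
R(\sigma) := D_h\Psi(\sigma\n) - D_1\Psi(\dot{f}_0(\sigma), \dot{\psi}_0(\sigma), \dot{\lambda}(\sigma))
\]
lies in the fixed finite-dimensional subspace $V := T_w(\ker(\Lambda-\lambda)) + T_u(\ker(\Lambda-\lambda)) \subset \ho$, where $T_u, T_w$ are the operators in \eqref{D1} and $(\dot{f}_0,\dot{\psi}_0)$ is a chosen particular solution depending linearly on $\sigma$.

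The decisive and hardest step is to show that $\sigma \mapsto R(\sigma)\bmod V$ has infinite-dimensional image. This is where the weak unique continuation principle (Theorem \ref{wucp}) is essential: since neither $f$ nor $\psi$ vanishes on any open subset of $\PO$, there is a non-empty open $U \subset \PO$ on which $f$, $\psi$, $|\nabla_\PO f|$ and $|\nabla_\PO\psi|$ are bounded away from zero. I would then test against a family $\{\sigma_i\}$ supported in $U$ of increasing frequency (for instance, localized high Fourier modes) and exploit principal-symbol considerations: the local differential terms in $D_h\Phi$, involving $\Delta_\PO\sigma$ and $\nabla_\PO\sigma$ with coefficients nondegenerate on $U$, dominate the smoothing pseudodifferential terms coming from $\Lambda(\sigma\,\cdot)$, whereas the range of $D_1\Psi$ is rigidly constrained by the form \eqref{D1}. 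The main obstacle lies precisely in this symbol comparison: one must track how $\dot{f}_0(\sigma)$ and $\dot{\psi}_0(\sigma)$, as solutions of non-local pseudodifferential equations, feed back into the third component, and verify that the composition terms in $D_1\Psi$ cannot absorb the local parts of $D_h\Psi$. Once this separation is established, infinitely many of the $R(\sigma_i)$ become independent modulo $V$, completing the proof of \eqref{key}.
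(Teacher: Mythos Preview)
Your plan is essentially the paper's own argument, just phrased as a direct construction rather than a contradiction. The paper assumes $\dim\{\Ima D\Phi/\Ima D_1\Phi\}=J<\infty$, solves $\dot f,\dot\psi,\dot\lambda$ in terms of $\dot h$ from the first two components (your pairing step), substitutes into the third, and then does exactly the order count you describe: $D_h\Psi(\dot h)$ contains the second-order term $\lambda f\psi\,\Delta_{\PO}\sigma$ (from Lemma~\ref{vari of Psi}), while $D_1\Psi(\dot f_0(\sigma),\dot\psi_0(\sigma),\dot\lambda(\sigma))$ is only first order in $\sigma$ because $(\Lambda-\lambda)^{-1}$ and the formula \eqref{eqn:v4 of Lh} for $D_hL_h$ are each order~$-1$ and~$1$ respectively, so their composition through \eqref{D1} is order~$1$. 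Hence $\Gamma(\sigma)=\lambda f\psi\,\Delta_{\PO}\sigma+\text{l.o.t.}$ cannot have finite rank unless $f\psi\equiv 0$ on $\PO$, which Theorem~\ref{wucp} forbids.

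One correction: you claim weak unique continuation gives an open set on which $|\nabla_{\PO}f|$ and $|\nabla_{\PO}\psi|$ are bounded away from zero. Theorem~\ref{wucp} says nothing about boundary gradients, and in fact Steklov eigenfunctions can have tangential critical points. Fortunately you do not need this: the only coefficient that must not vanish identically is the one multiplying the top-order term $\Delta_{\PO}\sigma$, namely $\lambda f\psi$, and for that it suffices that neither $f$ nor $\psi$ vanishes on an open subset of $\PO$. Drop the gradient hypothesis and your symbol argument goes through cleanly, matching the paper's order-counting proof.
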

To prepare for the proof of Proposition \ref{infinite dim}, we first derive the variation formula of $\Psi$ with respect to $h$.
\begin{lemma}\label{vari of Psi}
Assume that $(f, \psi, \lambda, i_{\O})\in \Psi^{-1}(0)$. For any $\dot{h}\in C^m(\PO, \R^n)$, the variation of $\Psi$ with respect to $h$ in the direction of $\dot{h}$ satisfies at $(f, \psi, \lambda, i_{\O})$:
\begin{align*}
 D_h\Psi(\dot{h})&=\left[\nabla w\cdot \nabla_x-(\nabla\cdot \n+2\lambda)\lambda w\right](-\dot{h}\cdot\nabla u)\\
&+\left[\nabla u\cdot \nabla_x-(\nabla\cdot \n+2\lambda)\lambda u\right](-\dot{h}\cdot\nabla w)\\
&+\lambda uw \Delta_{\PO}\sigma+\nabla \Psi \cdot \dot{h}
\end{align*}on $\PO$. Here $u, w$ denote the harmonic extension of $f, \psi$ respectively. 
\end{lemma}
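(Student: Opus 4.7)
The plan is to differentiate $\Psi$ in $t$ at $t=0$ with $x\in\PO$ held fixed, and split the result via the chain rule into an ``Eulerian'' piece (derivative with $y=h(x,t)$ fixed) and a ``transport'' piece $\nabla\Psi\cdot\dot h$. Concretely, I would first extend
\[
G(y,t)=\nabla u^h\cdot\nabla w^h-(\nabla\cdot\n_h+2\lambda)\lambda u^h w^h
\]
to a one-sided neighbourhood of $\PO_h$; this is legitimate since $\n_h$ admits the extension supplied by Proposition \ref{NP}. Because $\Psi(x,t)=G(h(x,t),t)$, the chain rule at $t=0$ gives
\[
\frac{\partial\Psi}{\partial t}\bigg|_{x=\text{const},\,t=0}=\frac{\partial G}{\partial t}\bigg|_{y=\text{const},\,t=0}+\nabla G|_{t=0}\cdot\dot h,
\]
and since $G|_{t=0}$ coincides with the natural extension of $\Psi|_{t=0}$, the transport term produces exactly the final summand $\nabla\Psi\cdot\dot h$ of the stated formula.

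Next I would expand the Eulerian piece by the product rule. Lemma \ref{lemma:v1} identifies $\dot u:=\partial_t u^h|_{y=\text{const},\,t=0}$ and $\dot w:=\partial_t w^h|_{y=\text{const},\,t=0}$ as harmonic functions on $\O$ with boundary data $-\dot h\cdot\nabla u$ and $-\dot h\cdot\nabla w$, respectively. Lemma \ref{var of n} gives $\partial_t\n_h|_{y=\text{const},\,t=0}=-\nabla_{\PO}\sigma$ along $\PO$, and taking the ambient divergence should yield
\[
\frac{\partial(\nabla\cdot\n_h)}{\partial t}\bigg|_{y=\text{const},\,t=0}=-\Delta_{\PO}\sigma,
\]
since the $\R^n$-divergence of a tangential vector field on $\PO$ equals its surface divergence, and the surface divergence of a surface gradient is the surface Laplacian. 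Assembling these contributions,
\[
\frac{\partial G}{\partial t}\bigg|_{y=\text{const},\,t=0}=\nabla\dot u\cdot\nabla w+\nabla u\cdot\nabla\dot w-(\nabla\cdot\n+2\lambda)\lambda(\dot u w+u\dot w)+\lambda uw\,\Delta_{\PO}\sigma.
\]

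The final step is purely algebraic: group the $\dot u$-contributions as $\left[\nabla w\cdot\nabla_x-(\nabla\cdot\n+2\lambda)\lambda w\right]\dot u$ and symmetrically for the $\dot w$-contributions, then substitute the boundary values $\dot u|_{\PO}=-\dot h\cdot\nabla u$ and $\dot w|_{\PO}=-\dot h\cdot\nabla w$ from Lemma \ref{lemma:v1}. Adding the transport term $\nabla\Psi\cdot\dot h$ recovers the claimed expression.

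The main obstacle will be justifying the mean-curvature variation $\partial_t(\nabla\cdot\n_h)|_{y=\text{const}}=-\Delta_{\PO}\sigma$: Lemma \ref{var of n} only supplies the $y$-fixed variation of $\n_h$ along $\PO$, whereas $\nabla\cdot\n_h$ involves differentiating an extension of $\n_h$ off the boundary. One must verify that the extension in Proposition \ref{NP} is compatible with the $t$-derivative so that the classical variation-of-mean-curvature formula under a purely normal perturbation is recovered; once this identity is secured, the remaining steps are routine product-rule bookkeeping.
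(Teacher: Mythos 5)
Your proposal follows essentially the same route as the paper's proof: the chain-rule split of $\partial_t\Psi|_{x=\mathrm{const}}$ into the $y$-fixed (Eulerian) derivative plus the transport term $\nabla\Psi\cdot\dot h$, a product-rule expansion of the Eulerian piece using Lemma \ref{lemma:v1} for $\partial_t u^h,\,\partial_t w^h$ (boundary data $-\dot h\cdot\nabla u$, $-\dot h\cdot\nabla w$) and Lemma \ref{var of n} for $\partial_t\n_h$, and the identification $\nabla\cdot(-\nabla_{\PO}\sigma)=-\Delta_{\PO}\sigma$. The ``main obstacle'' you flag is treated in the paper exactly as you suggest---commuting the divergence with the $t$-derivative and inserting the formula of Lemma \ref{var of n}---so your argument is correct and coincides with the paper's.
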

\begin{proof} 
For connivence, for a fixed $\lambda$, we introduce the operator $\eta (f, \psi)=\Psi(f, \psi, \lambda, i_{\O})$:
\begin{align*}
\eta: &\HO \times \HO \rightarrow \ho\\
&(f, \psi)\rightarrow \left(\nabla u\cdot \nabla w-(\nabla \cdot \n+2\lambda)\lambda uw\right)|_{\PO}
\end{align*} with $u, w$ as the harmonic extension of $f, \psi$ respectively.  We notice that $\eta$ is well-defined on any domain in $\R^n$.

We define its pull-back operator as follows:
\begin{align*}
\eta_h=h^{\ast}\eta (h^{-1})^{\ast}: &\HO \times \HO \rightarrow \ho\\
&(f, \psi)\rightarrow h^{\ast}\left(\nabla u^h\cdot \nabla w^h-(\nabla \cdot \n_h+2\lambda)\lambda u^hw^h\right)|_{\PO}
\end{align*}with $u^h, w^h$ as the harmonic extension of $f_h=(h^{-1})^{\ast}f, \psi_h=(h^{-1})^{\ast}\psi$ respectively on $\O_h$. 

For $\dot{h}\in C^m(\O, \R^n)$, choose $h(\cdot, t)\in \E(\O)$ such that
\begin{align*}
\left\{
\begin{aligned}
&h(\cdot,0)=i_{\O},\\
&\frac{\partial}{\partial t}h(\cdot,0)=\dot{h}.
\end{aligned}\right.
\end{align*} 
Then $D_h\Psi(\dot{h})=\frac{\partial \eta_h(f, \psi)}{\partial t}|_{x=const, t=0}$. By the chain rule, we have
\begin{align}\label{varPsi1}
\begin{aligned}
D_h\Psi(\dot{h})(x)&=\frac{\partial \eta_h(f, \psi)}{\partial t}|_{x=const, t=0}(x), \, x\in \PO\\ 
&=\frac{\partial \eta (f^h, \psi^h)}{\partial t}(h(x))|_{x=const, t=0},\\
 &\overset{t=0}{=}\frac{\partial \eta (f^h, \psi^h)}{\partial t}|_{y=const.}(y)+\nabla_y \eta(f^h, \psi^h)\cdot \frac{\partial y}{\partial t}, \,  \, y=h(x,t)\\
 &\overset{t=0}{=}\frac{\partial \eta (f^h, \psi^h)}{\partial t}|_{y=const.}(y)+\nabla_x \eta (f, \psi)\cdot \dot{h}(x), \\
  &\overset{t=0}{=}\frac{\partial \eta (f^h, \psi^h)}{\partial t}|_{y=const.}(y)+\nabla_x \Psi\cdot \dot{h}(x).
  \end{aligned}
 \end{align}
Here \[\eta (f^h, \psi^h)=\left(\nabla_y u^h\cdot \nabla_y w^h-(\nabla_y \cdot \n_h+2\lambda)\lambda u^hw^h\right)|_{\PO_h}\] with $u^h, w^h$ as harmonic extensions of $f_h=(h^{-1})^{\ast}f, \psi_h=(h^{-1})^{\ast}\psi$ on $\O_h$ respectively. Thus the derivative $\frac{\partial \eta (f^h, \psi^h)}{\partial t}|_{y=const.}(y)$ satisfies
 \begin{align}\label{varPsi2}
 \begin{aligned}
\frac{\partial \eta (f^h, \psi^h)}{\partial t}|_{y=const.}(y)&\overset{t=0}{=}\left[\nabla_y w^h\cdot \nabla_y-(\nabla_y\cdot \n_h+2\lambda)\lambda w^h\right](\nabla_x u^h \cdot \frac{\partial x}{\partial t})\\
&+\left[\nabla_y u^h\cdot \nabla_y-(\nabla_y\cdot \n_h+2\lambda)\lambda u^h\right](\nabla_x w^h \cdot \frac{\partial x}{\partial t})\\
&-\nabla_y \cdot (\frac{\partial \n_h}{\partial t}|_{y=const.})\lambda u^hw^h,\\
&=\left[\nabla w\cdot \nabla_x-(\nabla\cdot \n+2\lambda)\lambda w\right](-\dot{h}\cdot\nabla u)\\
&+\left[\nabla u\cdot \nabla_x-(\nabla\cdot \n+2\lambda)\lambda u\right](-\dot{h}\cdot\nabla w)\\
&-\nabla\cdot  (\frac{\partial \n_h}{\partial t}|_{y=const.,t=0})\lambda uw.
\end{aligned}
\end{align} The second equality follows from that $\frac{\partial x}{\partial t}|_{t=0}=\frac{\partial h^{-1}}{\partial t}|_{t=0}= -h_x^{-1}h_t|_{t=0}=-\dot{h}$.

By Lemma \ref{var of n}, we have that $\frac{\partial \n_h}{\partial t}|_{y=const.,t=0}=-\nabla_{\PO} \sigma$ with $\sigma= \dot{h}\cdot \n$. Then \[\nabla\cdot  (\frac{\partial \n_h}{\partial t}|_{y=const.,t=0})=\nabla\cdot (-\nabla_{\PO} \sigma)=-\Delta_{\PO}\sigma.\] Plug this formula into \eqref{varPsi2} above. It follows that 
\begin{align*}
\frac{\partial \eta (f^h, \psi^h)}{\partial t}|_{y=const.}(y)&\overset{t=0}{=}\left[\nabla w\cdot \nabla_x-(\nabla\cdot \n+2\lambda)\lambda w\right](-\dot{h}\cdot\nabla u)\\
&+\left[\nabla u\cdot \nabla_x-(\nabla\cdot \n+2\lambda)\lambda u\right](-\dot{h}\cdot\nabla w)\\
&+\lambda uw \Delta_{\PO}\sigma.
\end{align*} Plugging this formula into \eqref{varPsi1} yields the desired result.

\end{proof}

Now we are ready to prove Proposition \ref{infinite dim}.
\begin{proof}[Proof of Proposition \ref{infinite dim}]
Assume \eqref{key} does not hold at $(f, \psi, \lambda, i_{\O})\in \Phi^{-1}(0)$. Let $\dim \left\{\frac{\Ima (D\Phi)}{\Ima(D_1 \Phi)}\right\}=J$ with a basis $\{\theta_1, \cdots, \theta_{J}\}$.  And $\theta_j=(\theta^1_j, \theta^2_j, \theta^3_j)$ for each $j$. Then, for any $(\dot{f}, \dot{\psi}, \dot{\lambda}, \dot{h})$, there exists $(v, w, s)\in S^k_p(\PO)\times S^k_p(\PO)\times \R$ and $(c_1, \cdots, c_{J}) \in \R^J$ such that $D\Phi(\dot{f}, \dot{\psi}, \dot{\lambda}, \dot{h})=D\Phi(v, w, s, 0)+\sum^{J}_{j=1}c_j \theta_j$, i.e. 
\begin{align*}
&D\Phi(\dot{f}-v, \dot{\psi}-w, \dot{\lambda}-s, \dot{h})=\sum^{J}_{j=1}c_j \theta_j.
\end{align*} Redefine $(\dot{f}, \dot{\psi}, \dot{\lambda}):=(\dot{f}-v, \dot{\psi}-w, \dot{\lambda}-s)$. Therefore, for any $\dot{h}$, there exists some $(\dot{f}, \dot{\psi}, \dot{\lambda})$ and constants $(c_1, \cdots, c_{J})$ such that 
\begin{align*}
D\Phi(\dot{f}, \dot{\psi}, \dot{\lambda}, \dot{h})&=\sum^{J}_{j=1}c_j \theta_j.
\end{align*} That is
\begin{align}
D\phi(\dot{f}, \dot{\lambda}, \dot{h})&=\sum^{J}_{j=1}c_j \theta^1_j;  \label{dh1}\\
D\phi(\dot{\psi}, \dot{\lambda}, \dot{h})&=\sum^{J}_{j=1}c_j \theta^2_j;\label{dh2}\\
\Psi(\dot{f}, \dot{\psi},  \dot{\lambda}, \dot{h})&=\sum^{J}_{j=1}c_j \theta^3_j.\label{dh3}
\end{align} 

We will first solve $\dot{f}, \dot{\lambda}, \dot{\psi}$ in terms of $\dot{h}$ by \eqref{dh1} and \eqref{dh2}. Then substitution of these formulas into \eqref{dh3} will imply an operator of $\dot{h}$ with finite rank. At the same time, the explicit calculation of $D\Psi$ will show such operator cannot be with finite rank. The contradiction implies that the assumption does not hold and \eqref{key} holds.

Step 1:  Solve $\dot{f}, \dot{\psi}, \dot{\lambda}$ in terms of $\dot{h}$ by \eqref{dh1} and \eqref{dh2}.

By \eqref{Dif1} and \eqref{Dif2}, there is 
\begin{align*}
D\phi(\dot{f}, \dot{\lambda}, \dot{h})&=D_1\phi(\dot{f}, \dot{\lambda})+D_h\phi( \dot{h})\\
&=(\Lambda-\lambda)\dot{f}-\dot{\lambda} f+D_hL_h(\dot{h})f.
\end{align*}
 Plugging this into \eqref{dh1} yields that
\begin{align}\label{dh11}
\begin{aligned}
&(\Lambda-\lambda)\dot{f}-\dot{\lambda} f+D_hL_h(\dot{h})f=\sum^{J}_{j=1}c_j \theta^1_j\\
\Rightarrow& (\Lambda-\lambda)\dot{f}=\sum^{J}_{j=1}c_j \theta_j^1+\dot{\lambda} f-D_hL_h(\dot{h})f.
\end{aligned}
\end{align}
Since $\Lambda$ is is self-adjoint and elliptic, its inverse operator is well-defined:
\[(\Lambda-\lambda)^{-1}: \Ima(\Lambda-\lambda)\rightarrow (\ker(\Lambda-\lambda))^{\bot}.\]  Let $\{\psi_j\}_{j=1}^l$ be the basis of $\ker(\Lambda-\lambda)$. Then \eqref{dh11} implies that there exist some constants $\{b_j\}_{j=1}^l$ such that
\begin{align}
 \dot{f}&=\sum^l_{j=1}b_j\psi_j+(\Lambda-\lambda)^{-1}\left(\sum^{J}_{j=1}c_j \theta_j^1+\dot{\lambda} f-D_hL_h(\dot{h})f)\right)\label{dh111}\\
\dot{\lambda}&=\frac{\int_{\PO}f\left(\sum^{J}_{j=1}c_j \theta_j^1-D_hL_h(\dot{h})f\right)}{\int_{\PO}f^2}.\label{dh112}
\end{align} Together \eqref{dh111} and \eqref{dh112} imply that $\dot{f}$ and $\dot{\lambda}$ are functions of $\dot{h}$.

From \eqref{dh2}, we can get similar formula for $\dot{\psi}$:
\begin{align}\label{dh21}
\dot{\psi}&=\sum^l_{j=1}d_j\psi_j+(\Lambda-\lambda)^{-1}\left(\sum^{J}_{j=1}c_j \theta_j^2+\dot{\lambda} \psi-D_hL_h(\dot{h})f\right)
\end{align} for some constants $d_j$. Together \eqref{dh21} and \eqref{dh112} imply that $\dot{\psi}$ is a function of $\dot{h}$. 

Therefore, substitution of \eqref{dh111}, \eqref{dh112} and \eqref{dh21} into \eqref{dh3} implies that the operator
\begin{align}\label{dh31}
\Gamma: \dot{h}&\rightarrow D\Psi (\dot{f}, \dot{\psi}, \dot{\lambda}, \dot{h})
\end{align} is with the finite rank.

Step 2: The calculation of $D\Psi$. \\
At $(f, \psi, \lambda, i_{\O})\in \Psi^{-1}(0)$, there is
\begin{align}
D\Psi(\dot{f}, \dot{\psi}, \dot{\lambda}, \dot{h})&=D_1\Psi(\dot{f}, \dot{\psi}, \dot{\lambda})+D_h\Psi(\dot{h}).
\end{align} 
 Let $u, w, \dot{f}^h, \dot{\psi}^h$ be corresponding harmonic extensions of $f, \psi, \dot{f}, \dot{\psi}$ respectively.  By \eqref{D1}, we have 
\begin{align}\label{dh31}
\begin{aligned}
D_1\Psi(\dot{f}, \dot{\psi}, \dot{\lambda})&=\left[\nabla w\cdot\nabla- (\nabla\cdot \n+2 \lambda)\lambda w \right]\dot{f}^h|_{\PO}\\
&+\left[\nabla u\cdot\nabla- (\nabla\cdot \n+2 \lambda)\lambda u \right]\dot{\psi}^h|_{\PO}-4\lambda \dot{\lambda} f\psi.
\end{aligned}
\end{align} Since
\begin{align*}
\left(\nabla w\cdot \nabla\dot{f}^h\right)|_{\PO}&=\nabla_{\PO} w\cdot \nabla_{\PO}\dot{f}^h+(\nabla_n w)( \nabla_n\dot{f}^h)\\
&=\nabla_{\PO} \psi\cdot \nabla_{\PO}\dot{f}+\lambda \psi  \nabla_n\dot{f}^h\\
&=\nabla_{\PO} \psi\cdot \nabla_{\PO}\dot{f}+\lambda \psi \Lambda \dot{f},
\end{align*} 
it follows that
\begin{align*}
\left(\nabla w\cdot \nabla-(\nabla\cdot \n+2\lambda)\lambda w\right)\dot{f}^h|_{\PO}&=\nabla_{\PO} \psi\cdot \nabla_{\PO}\dot{f}+\lambda \psi \Lambda \dot{f}-(\nabla\cdot \n+2\lambda)\lambda \psi \dot{f}.
\end{align*} Similarly, there is
\begin{align*}
\left(\nabla u\cdot \nabla-(\nabla\cdot \n+2\lambda)\lambda u\right)\dot{\psi}^h|_{\PO}&=\nabla_{\PO} f\cdot \nabla_{\PO}\dot{\psi}+\lambda f \Lambda \dot{\psi}-(\nabla\cdot \n+2\lambda)\lambda f \dot{\psi}.
\end{align*}
Plugging these two formulas into \eqref{dh31} yields that
\begin{align}\label{dh33}
\begin{aligned}
D_1\Psi(\dot{f}, \dot{\psi}, \dot{\lambda})&=\nabla_{\PO} \psi\cdot \nabla_{\PO}\dot{f}+\lambda \psi \Lambda \dot{f}-(\nabla\cdot \n+2\lambda)\lambda \dot{f}\\
&+\nabla_{\PO} f\cdot \nabla_{\PO}\dot{\psi}+\lambda f \Lambda \dot{\psi}-(\nabla\cdot \n+2\lambda)\lambda \dot{\psi}-4\lambda \dot{\lambda} f\psi.
\end{aligned}
\end{align} 
This implies that $D_1\Psi$ depends on $\dot{f}, \dot{\psi}$ up to order $1$ and $\dot{\lambda}$ up to oder $0$. 

At the same time, \eqref{eqn:v2 of Lh} implies that for any $\Lambda f=\lambda f$ 
\begin{align*}
D_hL_h(\dot{h})f&=\lambda \nabla_{\PO}f \cdot \dot{h}-\sigma (\Delta_{\PO}f+H(\lambda f))-(\nabla_{\PO}\sigma) \cdot (\nabla_{\PO} f)-\Lambda(\nabla u\cdot \dot{h})
\end{align*} which depends on $\dot{h}$ up to oder $1$. Together with \eqref{dh111}, \eqref{dh112} and \eqref{dh21}, this fact implies that $\dot{f}, \dot{\psi}$ and $\dot{\lambda}$ are functions of $\dot{h}$ up to $0$ order. Therefore, \eqref{dh33} implies that $D_1\Psi$ is an operator of $\dot{h}$ up to order $1$.

By Lemma \ref{vari of Psi}, we have 
\begin{align}\label{dh34}
\begin{aligned}
 D_h\Psi(\dot{h})&=\left[\nabla w\cdot \nabla_x-(\nabla\cdot \n+2\lambda)\lambda w\right](-\dot{h}\cdot\nabla u)\\
&+\left[\nabla u\cdot \nabla_x-(\nabla\cdot \n+2\lambda)\lambda u\right](-\dot{h}\cdot\nabla w)\\
&+\lambda uw \Delta_{\PO}\sigma+\nabla \Psi \cdot \dot{h}.
\end{aligned}
\end{align} This is an operator of $\dot{h}$ up to order $2$.

By \eqref{dh33} and \eqref{dh34}, we have:
\begin{align*}
\Gamma: \dot{h}&\rightarrow D\Psi (\dot{f}, \dot{\psi}, \dot{\lambda}, \dot{h}) =\lambda f \psi  \Delta_{\PO}\sigma +\text{lower order terms}.
\end{align*}  Since $\sigma=\dot{h}\cdot \n$ ranges over $C^m(\PO)$, the operator $\Delta_{\PO}\sigma$ cannot be of finite rank. For $\Gamma$ to be of finite rank, it would be necessary that $f \psi =0$ on $\PO$. However, this is impossible by the weak unique continuation principle for Steklov eigenfunctions in Theorem \ref{wucp}. Therefore, $\Gamma$ cannot be of finite rank. This yields a contradiction, and hence \eqref{key} holds.

Indeed, if $f \psi =0$ on $\PO$, then one of $f$ or $\psi$ must vanish on an open subset of $\PO$, and hence vanish identically by Theorem \ref{wucp}. Since both $f$ and $\psi$ are non-constant Steklov eigenfunctions, this is impossible.

\end{proof}
 
\subsection{Proof of Main theorem}
\begin{proof}
By Theorem \ref{MT3}, there exist an open and dense set $E_1\subset \E(\O)$ such that
\begin{align*}
\begin{aligned}
\phi:&S^k_p(\PO)\times \R\times E_1\rightarrow \ho\\
&(f, \lambda, h)\rightarrow h^{\ast}(\Lambda-\lambda) (h^{-1})^{\ast} f
\end{aligned}
\end{align*} has $0$ as a regular value.
Let $X=S^k_p(\PO)\times \R, Y=C^m(\O, \R^n),A=S^k_p\times \R\times E_1$ and $\zeta=0$. At any $(f, \lambda, h)\in \phi^{-1}(0)$ with $h\in E_1$,  $D_1\phi$ is Fredholm with index $0$ by Lemma \ref{Fred} and $D\phi$ is surjective. Applying Theorem \ref{T1} to this $\phi$ yields that
\[\dot{Y}_{crit}=\{h \in E_1\,| \,\text{0 is a critical value of $\phi(,h)$}\}\] is meager. Then its complement \[E_2=\{h\in E_1\, |\, \text{0 is a regular value of $\phi(,h)$}\}\] is residual. By Lemma \ref{eva2}, this implies that all non-zero Steklov eigenvalues of $\O_h$ are simple for $h\in E_2$.

\end{proof}

\appendix

\begin{bibdiv}
\begin{biblist}
\bib{C}{article}{
   author={Calder\'{o}n, Alberto-P.},
   title={On an inverse boundary value problem},
   conference={
      title={Seminar on Numerical Analysis and its Applications to Continuum
      Physics },
      address={Rio de Janeiro},
      date={1980},
   },
   book={
      publisher={Soc. Brasil. Mat., Rio de Janeiro},
   },
   date={1980},
   pages={65--73},
   %review={\MR{590275}},
}
\bib{CH}{book}{
   author={Courant, R.},
   author={Hilbert, D.},
   title={Methods of mathematical physics. Vol. I},
   publisher={Interscience Publishers, Inc., New York, N.Y.},
   date={1953},
   pages={xv+561},
   note={German edition, 1931.}
   %review={\MR{0065391}},
}
\bib{CGGS}{article}{
   author={Colbois, Bruno},
   author={Girouard, Alexandre},
   author={Gordon, Carolyn},
   author={Sher, David},
   title={Some recent developments on the Steklov eigenvalue problem},
   journal={Rev. Mat. Complut.},
   volume={37},
   date={2024},
   number={1},
   pages={1--161},
   issn={1139-1138},
  % review={\MR{4695859}},
   %doi={10.1007/s13163-023-00480-3},
}
\bib{DKL16}{article}{
   author={Dambrine, M.},
   author={Kateb, D.},
   author={Lamboley, J.},
   title={An extremal eigenvalue problem for the Wentzell-Laplace operator},
   journal={Ann. Inst. H. Poincar\'e{} C Anal. Non Lin\'eaire},
   volume={33},
   date={2016},
   number={2},
   pages={409--450},
   issn={0294-1449},
   %review={\MR{3465381}},
   %doi={10.1016/j.anihpc.2014.11.002},
}
\bib{DZ2001}{book}{
   author={Delfour, M. C.},
   author={Zol\'esio, J.-P.},
   title={Shapes and geometries},
   series={Advances in Design and Control},
   volume={4},
   note={Analysis, differential calculus, and optimization},
   publisher={Society for Industrial and Applied Mathematics (SIAM),
   Philadelphia, PA},
   date={2001},
   pages={xviii+482},
   isbn={0-89871-489-3},
   %review={\MR{1855817}},
}
\bib{FS11}{article}{
   author={Fraser, Ailana},
   author={Schoen, Richard},
   title={The first Steklov eigenvalue, conformal geometry, and minimal
   surfaces},
   journal={Adv. Math.},
   volume={226},
   date={2011},
   number={5},
   pages={4011--4030},
   issn={0001-8708},
   %review={\MR{2770439}},
   %doi={10.1016/j.aim.2010.11.007},
}

%\bib{FS13}{article}{
%   author={Fraser, Ailana},
%   author={Schoen, Richard},
%   title={Minimal surfaces and eigenvalue problems},
%   conference={
%      title={Geometric analysis, mathematical relativity, and nonlinear
%      partial differential equations},
%   },
%   book={
%      series={Contemp. Math.},
%      volume={599},
%      publisher={Amer. Math. Soc., Providence, RI},
%   },
%   date={2013},
%   pages={105--121},
%   %review={\MR{3202476}},
%   %doi={10.1090/conm/599/11927},
%}

\bib{FS16}{article}{
   author={Fraser, Ailana},
   author={Schoen, Richard},
   title={Sharp eigenvalue bounds and minimal surfaces in the ball},
   journal={Invent. Math.},
   volume={203},
   date={2016},
   number={3},
   pages={823--890},
   issn={0020-9910},
   %review={\MR{3461367}},
   %doi={10.1007/s00222-015-0604-x},
}
\bib{FS19}{article}{
   author={Fraser, Ailana},
   author={Schoen, Richard},
   title={Shape optimization for the Steklov problem in higher dimensions},
   journal={Adv. Math.},
   volume={348},
   date={2019},
   pages={146--162},
   issn={0001-8708},
  % review={\MR{3925929}},
   %doi={10.1016/j.aim.2019.03.011},
}
%\bib{FS20}{article}{
%   author={Fraser, Ailana},
%   author={Schoen, Richard},
%   title={Some results on higher eigenvalue optimization},
%   journal={Calc. Var. Partial Differential Equations},
%   volume={59},
%   date={2020},
%   number={5},
%   pages={Paper No. 151, 22},
%   issn={0944-2669},
%   %review={\MR{4137805}},
%   %doi={10.1007/s00526-020-01802-9},
%}
\bib{GKLU}{article}{
   author={Greenleaf, Allan},
   author={Kurylev, Yaroslav},
   author={Lassas, Matti},
   author={Uhlmann, Gunther},
   title={Cloaking devices, electromagnetic wormholes, and transformation
   optics},
   journal={SIAM Rev.},
   volume={51},
   date={2009},
   number={1},
   pages={3--33},
   issn={0036-1445},
   %review={\MR{2481110}},
   %doi={10.1137/080716827},
}

\bib{GL}{article}{
   author={Garofalo, Nicola},
   author={Lin, Fang-Hua},
   title={Unique continuation for elliptic operators: a
   geometric-variational approach},
   journal={Comm. Pure Appl. Math.},
   volume={40},
   date={1987},
   number={3},
   pages={347--366},
   issn={0010-3640},
  % review={\MR{882069}},
   %doi={10.1002/cpa.3160400305},
}
\bib{GP}{article}{
   author={Girouard, Alexandre},
   author={Polterovich, Iosif},
   title={Spectral geometry of the Steklov problem (survey article)},
   journal={J. Spectr. Theory},
   volume={7},
   date={2017},
   number={2},
   pages={321--359},
   issn={1664-039X},
   %review={\MR{3662010}},
   %doi={10.4171/JST/164},
}

\bib{GT}{book}{
   author={Gilbarg, David},
   author={Trudinger, Neil S.},
   title={Elliptic partial differential equations of second order},
   series={Classics in Mathematics},
   note={Reprint of the 1998 edition},
   publisher={Springer-Verlag, Berlin},
   date={2001},
   pages={xiv+517},
   isbn={3-540-41160-7},
   %review={\MR{1814364}},
}
\bib{H85}{article}{
   author={Henry, Daniel B.},
   title={Perturbation of the boundary for boundary value problems for partial differential equations. },
   journal={ Sem. Brasileiro Anal. },
   volume={ATS 22},
   date={1985},
 }
\bib{H86}{article}{
   author={Henry, Daniel B.},
   title={Generic properties of equilibrium solutions by perturbation of the
   boundary},
   conference={
      title={Dynamics of infinite-dimensional systems},
      address={Lisbon},
      date={1986},
   },
   book={
      series={NATO Adv. Sci. Inst. Ser. F: Comput. Systems Sci.},
      volume={37},
      publisher={Springer, Berlin},
   },
   date={1987},
   pages={129--139},
   %review={\MR{921906}},
}

\bib{H05}{book}{
  author={Henry, Dan}, 
   %place={Cambridge}, 
   series={London Mathematical Society Lecture Note Series},  %title={Frontmatter}, 
title={Perturbation of the Boundary in Boundary-Value Problems of  Partial Differential Equations}, 
   publisher={Cambridge University Press}, 
   %editor={Hale, Jack and Pereira, Antônio LuizEditors}, 
   year={2005}, 
   pages={i–vi}, 
   collection={London Mathematical Society Lecture Note Series}
}

\bib{H1908}{book}{
   author={Hadamard, Jacques},
   title={\OE uvres de Jacques Hadamard. Tomes I, II, III, IV},
   language={French},
   note={Comit\'e{} de publication des oeuvres de Jacques Hadamard: M.
   Fr\'echet, P. Levy, S. Mandelbrojt, L. Schwartz},
   publisher={\'Editions du Centre National de la Recherche Scientifique,
   Paris},
   date={1968},
  % review={\MR{0230598}},
}
%\bib{H2006}{book}{
%   author={Henrot, Antoine},
%   title={Extremum problems for eigenvalues of elliptic operators},
%   series={Frontiers in Mathematics},
%   publisher={Birkh\"auser Verlag, Basel},
%   date={2006},
%   pages={x+202},
%   isbn={978-3-7643-7705-2},
%   isbn={3-7643-7705-4},
%   review={\MR{2251558}},
%   % it is about eigenvalues, not applications in applied math
%}
\bib{Li12}{book}{
   author={Li, Peter},
   title={Geometric analysis},
   series={Cambridge Studies in Advanced Mathematics},
   volume={134},
   publisher={Cambridge University Press, Cambridge},
   date={2012},
   pages={x+406},
   isbn={978-1-107-02064-1},
   %review={\MR{2962229}},
   %doi={10.1017/CBO9781139105798},
}
\bib{M72}{article}{
   author={Micheletti, Anna Maria},
   title={Perturbazione dello spettro dell'operatore di Laplace, in
   relazione ad una variazione del campo},
   language={Italian},
   journal={Ann. Scuola Norm. Sup. Pisa Cl. Sci. (3)},
   volume={26},
   date={1972},
   pages={151--169},
   issn={0391-173X},
   %review={\MR{367480}},
   % proved generic implicity of Dirichlet case
}
\bib{M722}{article}{
   author={Micheletti, Anna Maria},
   title={Metrica per famiglie di domini limitati e propriet\`a{} generiche
   degli autovalori},
   language={Italian},
   journal={Ann. Scuola Norm. Sup. Pisa Cl. Sci. (3)},
   volume={26},
   date={1972},
   pages={683--694},
   %issn={0391-173X},
   %review={\MR{0377306}},
   % proved the complete metric space, and sumpletment of previous generic simplicity
}
%\bib{MR0336109}{article}{
%   author={Micheletti, Anna Maria},
%   title={Perturbazione dello spettro di un operatore ellittico di tipo
%   variazionale, in relazione ad una variazione del campo},
%   language={Italian},
%   journal={Ann. Mat. Pura Appl. (4)},
%   volume={97},
%   date={1973},
%   pages={267--281},
%   issn={0003-4622},
%   %review={\MR{0336109}},
%   %doi={10.1007/BF02414915},
%}

\bib{NJ}{book}{
   author={Ne\v{c}as, Jind\v{r}ich},
   title={Direct methods in the theory of elliptic equations},
   series={Springer Monographs in Mathematics},
   note={Translated from the 1967 French original by Gerard Tronel and Alois
   Kufner;
   Editorial coordination and preface by \v{S}\'{a}rka Ne\v{c}asov\'{a} and a contribution
   by Christian G. Simader},
   publisher={Springer, Heidelberg},
   date={2012},
   pages={xvi+372},
   isbn={978-3-642-10454-1},
   isbn={978-3-642-10455-8},
   %review={\MR{3014461}},
   %doi={10.1007/978-3-642-10455-8},
}

\bib{R1877}{book}{
   author={Rayleigh, John William Strutt, Baron},
   title={The Theory of Sound},
   note={2d ed},
   publisher={Dover Publications, New York},
   date={1945},
   pages={Vol. I, xlii+480 pp.; vol. II, xii+504},
   %review={\MR{0016009}},
}
\bib{S}{article}{
   author={Stekloff, W.},
   title={Sur les probl\`emes fondamentaux de la physique math\'{e}matique (suite
   et fin)},
   language={French},
   journal={Ann. Sci. \'{E}cole Norm. Sup. (3)},
   volume={19},
   date={1902},
   pages={455--490},
   issn={0012-9593},
   %review={\MR{1509018}},
}
\bib{SU}{article}{
   author={Sylvester, John},
   author={Uhlmann, Gunther},
   title={A uniqueness theorem for an inverse boundary value problem in
   electrical prospection},
   journal={Comm. Pure Appl. Math.},
   volume={39},
   date={1986},
   number={1},
   pages={91--112},
   issn={0010-3640},
   %review={\MR{820341}},
   %doi={10.1002/cpa.3160390106},
}

\bib{U}{article}{
   author={Uhlenbeck, K.},
   title={Generic properties of eigenfunctions},
   journal={Amer. J. Math.},
   volume={98},
   date={1976},
   number={4},
   pages={1059--1078},
   issn={0002-9327},
   %review={\MR{464332}},
   %doi={10.2307/2374041},
}

\bib{W22}{article}{
   author={Wang, Lihan},
   title={Generic properties of Steklov eigenfunctions},
   journal={Trans. Amer. Math. Soc.},
   volume={375},
   date={2022},
   number={11},
   pages={8241--8255},
   issn={0002-9947},
   %review={\MR{4491450}},
   %doi={10.1090/tran/8769},
}
\end{biblist}
\end{bibdiv}

\end{document}